\theoremstyle{plain}
\newtheorem{theorem}{Theorem}[section]
\newtheorem{lemma}[theorem]{Lemma}
\newtheorem{proposition}[theorem]{Proposition}
\theoremstyle{definition}
\theoremstyle{remark}
\DeclareMathOperator{\reg}{\mbox{reg}}
\DeclareMathOperator{\ureg}{\mbox{ureg}}
\DeclareMathOperator{\codim}{\mbox{codim}}
\DeclareMathOperator{\nul}{\mbox{nullity}}
\DeclareMathOperator{\corank}{\mbox{corank}}
\DeclareMathOperator{\sym}{\mbox{Sym}}
\DeclareMathOperator{\aut}{\mbox{Aut}}
\begin{document}

\title[Semigroups of (linear) transformations whose restrictions belong \ldots]
{Semigroups of (linear) transformations whose restrictions belong to a given semigroup}

\author[Mosarof Sarkar]{\bfseries Mosarof Sarkar}
\address{Department of Mathematics, Central University of South Bihar, Gaya, Bihar, India}
\email{mosarofsarkar@cusb.ac.in}

\author[Shubh N. Singh]{\bfseries Shubh N. Singh}
\address{Department of Mathematics, Central University of South Bihar, Gaya, Bihar, India}
\email{shubh@cub.ac.in}


\begin{abstract}
Let $T(X)$ (resp. L(V)) be the semigroup of all transformations (resp. linear transformations) of a set $X$ (resp. vector space $V$). For a subset $Y$ of $X$ and a subsemigroup $\mathbb{S}(Y)$ of $T(Y)$, consider the subsemigroup $T_{\mathbb{S}(Y)}(X) = \{f\in T(X)\colon f_{\upharpoonright_Y} \in \mathbb{S}(Y)\}$ of $T(X)$, 
where $f_{\upharpoonright_Y}\in T(Y)$ agrees with $f$ on $Y$. We give a new characterization for $T_{\mathbb{S}(Y)}(X)$ to be a regular semigroup [inverse semigroup]. 
For a subspace $W$ of $V$ and a subsemigroup $\mathbb{S}(W)$ of $L(W)$,
we define an analogous subsemigroup $L_{\mathbb{S}(W)}(V) = \{f\in L(V) \colon f_{\upharpoonright_W} \in \mathbb{S}(W)\}$ of $L(V)$. We describe regular elements in $L_{\mathbb{S}(W)}(V)$ and determine when $L_{\mathbb{S}(W)}(V)$ is a regular semigroup [inverse semigroup, completely regular semigroup]. If $\mathbb{S}(Y)$ (resp. $\mathbb{S}(W)$) contains the identity of $T(Y)$ (resp. $L(W)$), we describe unit-regular elements in $T_{\mathbb{S}(Y)}(X)$ (resp. $L_{\mathbb{S}(W)}(V)$) and determine when $T_{\mathbb{S}(Y)}(X)$ (resp. $L_{\mathbb{S}(W)}(V)$) is a unit-regular semigroup.  
\end{abstract}

\subjclass[2010]{20M17, 20M20, 15A03, 15A04.}

\keywords{Semigroups of transformations; Regular and unit-regular elements; Regular semigroups;  Unit-regular semigroups;  Completely regular semigroups;  Inverse semigroups.}

\maketitle

\section{Introduction}

For a given set $X$, denote by $T(X)$ the semigroup under composition consisting of all transformations from $X$ into itself.  The semigroup $T(X)$ and its various subsemigroups have been extensively studied over the last several decades, since every semigroup can be embedded in $T(Z)$ for some set $Z$ (cf. \cite[Theorem 1.1.2]{howie95}).


\vspace{0.05cm}
Recently, Janusz Konieczny \cite{koni-sf22} introduced the subsemigroup $T_{\mathbb{S}(Y)}(X)$ of $T(X)$ defined by
\[T_{\mathbb{S}(Y)}(X) = \{f\in T(X)\colon f_{\upharpoonright_Y} \in \mathbb{S}(Y)\},\] 
where $Y$ is a subset of $X$, $\mathbb{S}(Y)$ is a subsemigroup of $T(Y)$, and $f_{\upharpoonright_Y}\colon Y \to Y$ is the transformation that agrees with $f$ on $Y$. Here, Janusz Konieczny described regular elements in $T_{\mathbb{S}(Y)}(X)$ and determined when $T_{\mathbb{S}(Y)}(X)$ is a regular semigroup and when it is an inverse semigroup. The authors \cite{koni-sf22, shubh-sf22-2} also determined when $T_{\mathbb{S}(Y)}(X)$ is a completely regular semigroup. Janusz Konieczny \cite{koni-sf22} characterized Green's relations on $T_{\mathbb{S}(Y)}(X)$ if $\mathbb{S}(Y)$ contains the identity transformation $I_Y$ on $Y$. If $Y = \varnothing$, then it is clear that $T_{\mathbb{S}(Y)}(X) = T(X)$. For $Y\neq \varnothing$, the semigroup $T_{\mathbb{S}(Y)}(X)$ has also been studied for several specific subsemigroups $\mathbb{S}(Y)$ of $T(Y)$ (see, e.g., \cite{hony11, magill66, nenth05, shubh-aejm22, sun-ams15, sun-as13}).


\vspace{0.05cm}
For a given vector space $V$ over a field, denote by $L(V)$ the semigroup under composition consisting of all linear transformations from $V$ into itself.
Inspired by the construction of the subsemigroup $T_{\mathbb{S}(Y)}(X)$ of $T(X)$, we define here an analogous subsemigroup $L_{\mathbb{S}(W)}(V)$ of $L(V)$ by
\[L_{\mathbb{S}(W)}(V) = \{f\in L(V) \colon f_{\upharpoonright_W} \in \mathbb{S}(W)\},\] where $W$ is a subspace of $V$, $\mathbb{S}(W)$ is a subsemigroup of $L(W)$, and $f_{\upharpoonright_W}\colon W \to W$ is the linear transformation that agrees with $f$ on $W$. The semigroup $L_{\mathbb{S}(W)}(V)$ generalizes both $L(V)$ and $\mathbb{S}(W)$ in the sense that $L_{\mathbb{S}(W)}(V) = L(V)$ if $W$ is the zero subspace of $V$, and $L_{\mathbb{S}(W)}(V) = \mathbb{S}(W)$ if $W = V$. The semigroup $L_{\mathbb{S}(W)}(V)$ is also a generalization of the subsemigroup $\overline{L}(V, W) = \{f\in L(V)\colon Wf\subseteq W\}$ of $L(V)$, since $L_{L(W)}(V)= \overline{L}(V, W)$. The semigroup 
$\overline{L}(V, W)$ has been studied by several authors (see, e.g., \cite{chaiya-sf19, nenth-ijmms06, shubh-lma22}). 


\vspace{0.05cm}
The purpose of this paper is to study both the semigroups  $T_{\mathbb{S}(Y)}(X)$ and $L_{\mathbb{S}(W)}(V)$. This paper is organized as follows. Section $2$ consists of the definitions and notation used throughout this paper. In Section $3$, we give a new characterization for the regularity of $T_{\mathbb{S}(Y)}(X)$. Next, we give a new characterization for $T_{\mathbb{S}(Y)}(X)$ to be an inverse semigroup. If $\mathbb{S}(Y)$ contains the identity of $T(Y)$, then we characterize unit-regular elements in $T_{\mathbb{S}(Y)}(X)$ and determine when $T_{\mathbb{S}(Y)}(X)$ is a unit-regular semigroup. In Section $4$, we characterize regular elements in $L_{\mathbb{S}(W)}(V)$ and determine when $L_{\mathbb{S}(W)}(V)$ is a regular semigroup. Next, if $\mathbb{S}(W)$ contains the identity of $L(W)$, then we characterize unit-regular elements in $L_{\mathbb{S}(W)}(V)$ and determine when $L_{\mathbb{S}(W)}(V)$ is a unit-regular semigroup. We also determine when $L_{\mathbb{S}(W)}(V)$ is an inverse semigroup, and when $L_{\mathbb{S}(W)}(V)$ is a completely regular semigroup.    


\section{Preliminaries and Notation}

%
%
%


We denote the function composition by juxtaposition, and compose functions from left to right. Let $f\colon A \to B$ be a map. We write the image of $x\in A$ under $f$ by $x f$, and the image set of $f$ by $R(f)$. For any subset $C$ of $B$, let $Cf^{-1} = \{x\in A\colon xf \in C\}$. If $C = \{z\}$, then we write $Cf^{-1}$ more simply as $zf^{-1}$. The \emph{kernel} of $f$ is an equivalence relation $\ker(f)$ on $A$ defined by $\ker(f) = \{(x,y)\in A\times A \colon xf = yf\}$.


\vspace{0.05cm}

Let $A$ be any set. A \emph{transversal} of an equivalence relation $\rho$ on $A$ is a subset of $A$ which contains exactly one element of each $\rho$-class. We use $T_f$ for any transversal of $\ker(f)$. We write $|A|$ for the cardinality of $A$, $I_A$ for the identity map on $A$, and  $A\setminus B$ for the set of elements in $A$ which are not in $B$, where $B$ is any set. Denote by $\Omega(A)$ (resp. $\sym(A)$) the semigroup (resp. group) under composition consisting of all surjective (resp. bijective) maps from $A$ to itself.

\vspace{0.05cm}

Let $f\colon A \to A$ be a map. We write $D(f)$ for the set $A\setminus R(f)$. The \emph{restriction} of $f$ to any subset $B$ of the domain of $f$ is the map $f_{|_B} \colon B \to A$ defined by $x(f_{|_B}) = xf$ for all $x \in B$. If $C$ is a subset of the codomain of $f$ such that
$Af \subseteq C$, the \emph{corestriction} of $f$ to $C$ is the map from $A$ to $C$ that agrees with $f$ on $A$. We write $f_{\upharpoonright_B}$ to denote the corestriction of the map $f_{|_B}$ to $B$ if $Bf \subseteq B$.

\vspace{0.05cm}

Let $S$ be a semigroup and $a\in S$. If there exists $b\in S$ such that $aba = a$, then $a$ is called a \emph{regular element} in $S$. We write $\reg(S)$ for the set of all regular elements in $S$. If $\reg(S) = S$, then $S$ is said to be a \emph{regular semigroup}. A semigroup $S$ is said to be an \emph{inverse semigroup} if for every $x\in S$, there is a unique $y\in S$ such that  $xyx =x$ and $yxy = y$. Equivalently, we say that $S$ is an \emph{inverse semigroup} if it is a regular semigroup and its idempotents commute (cf. \cite[Theorem 5.1.1]{howie95}). If every element of $S$ belongs to some subgroup of $S$, then $S$ is called a \emph{completely regular semigroup}. We write $E(S)$ for the set of idempotents of $S$.


\vspace{0.05cm}
Let $S$ be a semigroup with identity. Denote by $U(S)$ the set of all units in $S$. An element $a\in S$ is called \emph{unit-regular} if there exists $u\in U(S)$ such that $aua = a$. We write $\ureg(S)$ for the set of all unit-regular elements in $S$. If $\ureg(S) = S$, then $S$ is said to be a \emph{unit-regular semigroup}. 


\vspace{0.05cm}
Let $V$ be a vector space over a field. We denote by $0$ the zero vector of $V$. The subspaces $\{0\}$ and $V$ of $V$ are called \emph{trivial subspaces}. By $\langle T \rangle$, we mean the subspace spanned by a subset $T$ of $V$. For any subspace $U$ of $V$, we write  $\dim(U)$ and $\codim(U)$ to denote the dimensions of $U$ and the quotient space $V/U$, respectively. Let $f\in L(V)$. Denote by $N(f)$ the null space $\{v\in V\colon vf = 0\}$ of $f$. We write $\nul(f)$ (resp. $\corank(f)$) to denote $\dim(N(f))$ (resp. $\dim(V/R(f))$), and $V \approx U$ if vector spaces $V$ and $U$ are isomorphic. We write $\aut(V)$ for the group under composition consisting of all automorphisms of $V$. The notation $\Omega(V)$ can be regarded as a linear version of the semigroup $\Omega(X)$ of surjective transformations of a set $X$.

\vspace{0.05cm}

We also refer the reader to \cite{howie95} and \cite{s-roman07} for undefined definitions and notation of semigroup theory and linear algebra, respectively.

\section{The semigroup $T_{\mathbb{S}(Y)}(X)$} 
In this section, we give a new characterization for the regularity of $T_{\mathbb{S}(Y)}(X)$. Next, we give a new characterization for $T_{\mathbb{S}(Y)}(X)$ to be an inverse semigroup. If $I_Y \in \mathbb{S}(Y)$, we describe unit-regular elements in $T_{\mathbb{S}(Y)}(X)$ and then
determine when $T_{\mathbb{S}(Y)}(X)$ is a unit-regular semigroup. Throughout this section, we assume that $Y$ is a nonempty subset of $X$ and $\mathbb{S}(Y)$ is a subsemigroup of $T(Y)$. 

\vspace{0.05cm}
We begin with the following proposition that gives a sufficient condition for $T_{\mathbb{S}(Y)}(X)$ to be regular.


	%


\begin{proposition}\label{S(W)=group-T(X,Y)=regular}
	If $\mathbb{S}(Y)$ is a subgroup of $\sym(Y)$, then $T_{\mathbb{S}(Y)}(X)$ is regular.
\end{proposition}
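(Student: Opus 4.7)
The plan is to show that every $f \in T_{\mathbb{S}(Y)}(X)$ admits an inverse-like element $g \in T_{\mathbb{S}(Y)}(X)$ with $fgf = f$, constructing $g$ explicitly. The key observation is that since $\mathbb{S}(Y)$ is a \emph{group} sitting inside $\sym(Y)$, the restriction $f_{\upharpoonright_Y}$ is a bijection of $Y$ and, crucially, its inverse already lives in $\mathbb{S}(Y)$. This is what will force $g_{\upharpoonright_Y} \in \mathbb{S}(Y)$, placing $g$ in $T_{\mathbb{S}(Y)}(X)$.

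Fix $f \in T_{\mathbb{S}(Y)}(X)$ and let $h \in \mathbb{S}(Y)$ denote the inverse of $f_{\upharpoonright_Y}$ in the group $\mathbb{S}(Y)$, so that $(yh)f = y$ for every $y \in Y$. I would then build $g \colon X \to X$ piecewise: on $Y$ set $yg = yh$; on $(X\setminus Y)\cap R(f)$ choose, for each $z$, a fixed preimage $z' \in zf^{-1}$ and set $zg = z'$; on $(X\setminus Y)\setminus R(f)$ define $g$ arbitrarily (say, as a fixed constant in $X$). Because $g_{\upharpoonright_Y} = h \in \mathbb{S}(Y)$, we get $g \in T_{\mathbb{S}(Y)}(X)$ for free.

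To verify $fgf = f$, I would split into cases based on where $af$ lands for $a \in X$. If $af \in Y$, then $(af)g = (af)h$ and the group identity $(yh)f = y$ applied to $y = af$ gives $((af)g)f = af$. If $af \in X\setminus Y$, then $af \in R(f)$, so by construction $(af)g$ is some preimage of $af$ under $f$, whence again $((af)g)f = af$. Both cases yield $a(fgf) = af$, so $fgf = f$.

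The only genuine step is recognizing that the group hypothesis supplies an inverse $h$ \emph{inside} $\mathbb{S}(Y)$; without that, one could invert $f_{\upharpoonright_Y}$ as a bijection of $Y$ but would have no guarantee that the resulting $g$ lies in $T_{\mathbb{S}(Y)}(X)$. The remaining construction of $g$ on $X\setminus Y$ is routine, relying only on the fact that $R(f)$ consists of points with nonempty $f$-preimage (and requires no choice principle beyond the usual one used throughout this setting).
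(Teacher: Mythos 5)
Your proof is correct, but it takes a different route from the paper. The paper does not construct a quasi-inverse at all: it invokes Konieczny's characterization of regular elements (Theorem 2.2(1) of \cite{koni-sf22}), which says $f\in \reg(T_{\mathbb{S}(Y)}(X))$ if and only if $f_{\upharpoonright_Y}\in \reg(\mathbb{S}(Y))$ and $R(f)\cap Y=R(f_{\upharpoonright_Y})$, and simply observes that both conditions are automatic when $\mathbb{S}(Y)$ is a subgroup of $\sym(Y)$ (the restriction is a unit, hence regular, and $R(f_{\upharpoonright_Y})=Y=R(f)\cap Y$ since $f_{\upharpoonright_Y}$ is onto $Y$). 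Your argument instead builds $g$ explicitly --- taking the group inverse $h$ of $f_{\upharpoonright_Y}$ on $Y$, chosen preimages on $(X\setminus Y)\cap R(f)$, and anything on the rest --- and verifies $fgf=f$ pointwise; the case split on whether $af$ lies in $Y$ is handled correctly, and the membership $g\in T_{\mathbb{S}(Y)}(X)$ follows from $g_{\upharpoonright_Y}=h\in\mathbb{S}(Y)$ exactly as you say. What your approach buys is self-containment: it does not depend on the cited external theorem and in effect reproves the relevant direction of it in this special case. What the paper's approach buys is brevity and uniformity, since the same citation is reused elsewhere (e.g., in the proof of Theorem \ref{regular-semigroup-T-XY}) to show certain maps are \emph{not} regular, which a purely constructive argument would not give.
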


\begin{proof}[\textbf{Proof}]
	Let $f\in T_{\mathbb{S}(Y)}(X)$. Then $f_{\upharpoonright_Y} \in \mathbb{S}(Y)$. 
	If $\mathbb{S}(Y)$ is a subgroup of $\sym(Y)$, then it is obvious that $f_{\upharpoonright_Y}\in \reg(\mathbb{S}(Y))$ and $R(f_{\upharpoonright_Y})=R(f)\cap Y$. Therefore $f\in \reg(T_{\mathbb{S}(Y)}(X))$ by \cite[Theorem 2.2 (1)]{koni-sf22}, and hence $T_{\mathbb{S}(Y)}(X)$ is regular.
\end{proof}


The following auxiliary lemma is also needed in the sequel.

\begin{lemma}\label{r-subsemigroup=subgroup}
	Let $S$ be a subsemigroup of a group $G$. Then $S$ is a subgroup of $G$ if and only if $S$ is regular.
\end{lemma}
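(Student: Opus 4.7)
The plan is to handle the two directions separately, with the forward direction being essentially immediate from the definition and the reverse direction relying on cancellation in the ambient group $G$.

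For the forward direction, suppose $S$ is a subgroup of $G$. Then every $a \in S$ has a group inverse $a^{-1} \in S$, and taking $b = a^{-1}$ gives $aba = a a^{-1} a = a$. Hence every element of $S$ is regular, so $S$ is a regular semigroup.

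For the reverse direction, suppose $S$ is regular. Fix any $a \in S$, and choose $b \in S$ with $aba = a$. Working inside the group $G$, multiply both sides on the left by $a^{-1}$ (the inverse of $a$ in $G$) to obtain $ba = e$, where $e$ is the identity of $G$; likewise, multiplying $aba = a$ on the right by $a^{-1}$ gives $ab = e$. Thus $b$ is the two-sided group inverse $a^{-1}$ of $a$, so $a^{-1} \in S$, and consequently $e = ab \in S$. Since $a \in S$ was arbitrary, $S$ contains the identity of $G$ and is closed under taking inverses; being already closed under the group operation, $S$ is a subgroup of $G$.

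There is no real obstacle here: the entire argument rests on the cancellation law in $G$, which turns the single regularity relation $aba = a$ into the two-sided identity $ab = ba = e$, forcing the witnessing element $b$ to coincide with the unique group inverse of $a$. The only point worth being careful about is that we use the group inverse \emph{in $G$}, not in $S$, to cancel — but once cancellation is applied, the resulting inverse is automatically exhibited as an element of $S$.
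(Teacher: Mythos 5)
Your proof is correct and complete: the forward direction is immediate from the existence of group inverses, and the reverse direction correctly uses cancellation in the ambient group $G$ to show that the regularity witness $b$ must equal $a^{-1}$, whence $e, a^{-1} \in S$ and $S$ is a subgroup. The paper itself states this lemma without any proof, treating it as folklore, so there is nothing to compare against; your cancellation argument is the standard one and would serve as a suitable proof if one were to be included.
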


%


Janusz Konieczny \cite[Theorem 2.2(2)]{koni-sf22} characterized the regularity of $T_{\mathbb{S}(Y)}(X)$. The following theorem provides a new characterization of the regularity of $T_{\mathbb{S}(Y)}(X)$.


\begin{theorem}\label{regular-semigroup-T-XY}
Let $\mathbb{S}(Y)$ be a subsemigroup of $T(Y)$. Then $T_{\mathbb{S}(Y)}(X)$ is regular if and only if one of the following holds:
\begin{enumerate}
	\item[\rm(i)] $\mathbb{S}(Y)$ is a subgroup of $\sym(Y)$.
	\item[\rm(ii)] $\mathbb{S}(Y)$ is regular and $Y=X$.
\end{enumerate}	
\end{theorem}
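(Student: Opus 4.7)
The plan is to separate the cases $Y=X$ and $Y\subsetneq X$. The forward implication is immediate: assumption (i) gives regularity by Proposition \ref{S(W)=group-T(X,Y)=regular}, while under (ii) we have $T_{\mathbb{S}(Y)}(X) = \mathbb{S}(Y)$, which is regular by hypothesis.

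For the converse, assume $T_{\mathbb{S}(Y)}(X)$ is regular. If $Y=X$, then $T_{\mathbb{S}(Y)}(X) = \mathbb{S}(Y)$ and (ii) holds trivially. So suppose $Y\subsetneq X$; I will prove (i). By Lemma \ref{r-subsemigroup=subgroup}, it is enough to verify that $\mathbb{S}(Y) \subseteq \sym(Y)$ and that $\mathbb{S}(Y)$ is regular. Regularity of $\mathbb{S}(Y)$ follows from a collapsing trick: given $g \in \mathbb{S}(Y)$, extend $g$ to $f \in T_{\mathbb{S}(Y)}(X)$ by sending every element of $X\setminus Y$ to a single fixed point of $Y$; a regular inverse $h$ of $f$ in $T_{\mathbb{S}(Y)}(X)$ then restricts to $k = h_{\upharpoonright_Y} \in \mathbb{S}(Y)$ satisfying $gkg = g$.

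The main work is showing $\mathbb{S}(Y) \subseteq \sym(Y)$, and the decisive step is surjectivity. Assume for contradiction that some $g \in \mathbb{S}(Y)$ admits $z \in Y \setminus R(g)$. Using $Y \subsetneq X$, pick $x_0 \in X\setminus Y$ and extend $g$ to $f \in T_{\mathbb{S}(Y)}(X)$ by setting $x_0 f = z$ and $xf$ equal to some fixed element of $R(g)$ for the remaining $x \in X\setminus Y$. Then $R(f)\cap Y = R(g)$, whence $f^{-1}(z) \subseteq X\setminus Y$. Regularity of $T_{\mathbb{S}(Y)}(X)$ produces $h$ with $fhf = f$; evaluating at $x_0$ gives $(zh)f = z$, forcing $zh \in f^{-1}(z) \subseteq X\setminus Y$, which contradicts $zh = z(h_{\upharpoonright_Y}) \in Y$.

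With both properties established, I would finish as follows: given $g \in \mathbb{S}(Y)$ and $h \in \mathbb{S}(Y)$ with $ghg = g$, surjectivity of $g$ upgrades this to $hg = I_Y$, which makes $h$ injective; since $h$ is also surjective (every element of $\mathbb{S}(Y)$ is), $h \in \sym(Y)$ and therefore $g = h^{-1} \in \sym(Y)$. Thus $\mathbb{S}(Y) \subseteq \sym(Y)$, and Lemma \ref{r-subsemigroup=subgroup} yields (i). The surjectivity argument is the delicate point --- one must construct the extension $f$ so that $f^{-1}(z)$ stays disjoint from $Y$, and this is where the hypothesis $Y \subsetneq X$ is used essentially.
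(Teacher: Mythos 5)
Your proof is correct and follows essentially the same route as the paper: the decisive witness is the same extension of a non-surjective $g\in\mathbb{S}(Y)$ to an $f\in T_{\mathbb{S}(Y)}(X)$ whose extra image point $z\in Y\setminus R(g)$ is reached only from $X\setminus Y$. The only real difference is that the paper invokes Konieczny's criterion ($f$ is regular iff $f_{\upharpoonright_Y}\in\reg(\mathbb{S}(Y))$ and $R(f)\cap Y=R(f_{\upharpoonright_Y})$) and the epimorphism $f\mapsto f_{\upharpoonright_Y}$ as black boxes, whereas you derive the contradiction from $fhf=f$ by hand and reprove the regularity of $\mathbb{S}(Y)$ directly, which makes the argument self-contained at no real cost. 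One slip to fix: with your construction $R(f)\cap Y=R(g)\cup\{z\}\neq R(g)$, since $x_0f=z\in Y$; the fact you actually need, and which does hold, is $zf^{-1}\cap Y=\varnothing$, which follows directly from $z\notin R(g)$ because every $y\in Y$ satisfies $yf=yg\in R(g)$. With that correction the argument is complete.
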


\begin{proof}[\textbf{Proof}]
Suppose that $T_{\mathbb{S}(Y)}(X)$ is regular. By Proposition \ref{S(W)=group-T(X,Y)=regular}, we see that $\rm(i)$ may hold. Let us assume that $\rm(i)$ does not hold.

\vspace{0.05cm}
Since  $T_{\mathbb{S}(Y)}(X)$ is regular, it follows that $\mathbb{S}(Y)$ is regular by \cite[Proposition 2.1]{koni-sf22} and the fact that any homomorphic image of a regular semigroup is regular (cf. \cite[Lemma 2.4.4]{howie95}).

\vspace{0.05cm}
To show $Y =X$, suppose to the contrary that $Y\neq X$. Observe that $|Y|\ge 2$, since $\mathbb{S}(Y)$ is not a subgroup of $\sym(Y)$. Now we claim that $\mathbb{S}(Y)\setminus \Omega(Y)\neq \varnothing$. Suppose to the contrary that $\mathbb{S}(Y)\setminus \Omega(Y)=\varnothing$. Then $\mathbb{S}(Y)\subseteq \Omega(Y)$. Since $\mathbb{S}(Y)$ is regular, we see that $\mathbb{S}(Y)\subseteq \sym(Y)$, and so $\mathbb{S}(Y)$ is a subgroup of $\sym(Y)$ by Lemma \ref{r-subsemigroup=subgroup}. This contradicts the assumption that $\rm(i)$ does not hold. Hence $\mathbb{S}(Y)\setminus \Omega(Y)\neq \varnothing$. Choose $\alpha \in \mathbb{S}(Y)$ such that $\alpha\notin \Omega(Y)$. Fix $y_0\in Y\setminus R(\alpha)$ and  define $f\in T(X)$ by 
\begin{equation*}
	xf=	
	\begin{cases}
		x\alpha   & \text{ if $x\in Y$},\\
		y_0       & \text{ if $x\in X\setminus Y$.}
	\end{cases}
\end{equation*}
Clearly $f\in T_{\mathbb{S}(Y)}(X)$. However, we see that $R(f)\cap Y\neq R(f_{\upharpoonright_Y})$. Therefore $f\notin \reg(T_{\mathbb{S}(Y)}(X))$ by \cite[Theorem 2.2(1)]{koni-sf22}, which contradicts the regularity of $T_{\mathbb{S}(Y)}(X)$. Hence $Y=X$.

\vspace{0.1cm}
For the converse, suppose first that $\rm(i)$ holds. Then $T_{\mathbb{S}(Y)}(X)$ is regular by Proposition \ref{S(W)=group-T(X,Y)=regular}. Next, suppose that $\rm(ii)$ holds. Since $Y = X$, we have $T_{\mathbb{S}(Y)}(X)=\mathbb{S}(Y)$, and so $T_{\mathbb{S}(Y)}(X)$ is regular by hypothesis.
\end{proof}

Janusz Konieczny \cite[Theorem 2.3]{koni-sf22} determined when $T_{\mathbb{S}(Y)}(X)$ is an inverse semigroup. The following theorem provides a new characterization for $T_{\mathbb{S}(Y)}(X)$ to be an inverse semigroup.

\begin{theorem}
Let $\mathbb{S}(Y)$ be a subsemigroup of $T(Y)$. Then $T_{\mathbb{S}(Y)}(X)$ is an inverse semigroup if and only if
\begin{enumerate}
	\item[\rm(i)] $\mathbb{S}(Y)$ is an inverse semigroup;
	\item[\rm(ii)] either $Y=X$ or $|X|=2$.
\end{enumerate}
\end{theorem}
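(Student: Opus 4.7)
The plan is to combine Theorem \ref{regular-semigroup-T-XY} with a careful analysis of the idempotents of $T_{\mathbb{S}(Y)}(X)$. For the forward direction, assume $T_{\mathbb{S}(Y)}(X)$ is an inverse semigroup. Then it is regular, and Theorem \ref{regular-semigroup-T-XY} splits into two cases: either $\mathbb{S}(Y)$ is a subgroup of $\sym(Y)$, or $\mathbb{S}(Y)$ is regular with $Y=X$. In the latter case $T_{\mathbb{S}(Y)}(X)=\mathbb{S}(Y)$ directly gives (i), and (ii) holds with $Y=X$. In the former case, $\mathbb{S}(Y)$ is a group and hence automatically an inverse semigroup, so (i) holds; the remaining task is to deduce $Y=X$ or $|X|=2$.

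To prove this dichotomy, I would argue by contradiction: suppose $Y\neq X$ and $|X|\geq 3$, and exhibit two non-commuting idempotents in $T_{\mathbb{S}(Y)}(X)$. The key observation is that, since $\mathbb{S}(Y)$ is a subgroup of $\sym(Y)$, we have $I_Y\in\mathbb{S}(Y)$; consequently, every $f\in T(X)$ that fixes every point of $Y$ automatically lies in $T_{\mathbb{S}(Y)}(X)$. I would split on $|Y|$. If $|Y|\geq 2$, pick distinct $y_1,y_2\in Y$ and some $z\in X\setminus Y$, and let $e_i$ be the transformation that fixes every element of $X\setminus\{z\}$ and sends $z$ to $y_i$; a direct check gives $e_i^2=e_i$, while $z(e_1e_2)=y_1\neq y_2=z(e_2e_1)$. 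If $|Y|=1$, then $|X\setminus Y|\geq 2$, so pick distinct $z_1,z_2\in X\setminus Y$ and let $e_1$ fix $X\setminus\{z_2\}$ and send $z_2$ to $z_1$, and let $e_2$ fix $X\setminus\{z_1\}$ and send $z_1$ to $z_2$; both are idempotents in $T_{\mathbb{S}(Y)}(X)$, but $z_1(e_1e_2)=z_2\neq z_1=z_1(e_2e_1)$. In each case, the non-commuting idempotents contradict the inverse semigroup hypothesis via \cite[Theorem 5.1.1]{howie95}.

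For the converse, assume (i) and (ii). If $Y=X$, then $T_{\mathbb{S}(Y)}(X)=\mathbb{S}(Y)$, which is inverse by (i). If $|X|=2$ and $Y\neq X$, then $|Y|=1$, forcing $\mathbb{S}(Y)=\{I_Y\}$; a direct enumeration shows that $T_{\mathbb{S}(Y)}(X)$ consists of exactly two maps — the identity on $X$ and the constant map from $X$ onto the unique point of $Y$ — which together form a two-element semilattice and hence an inverse semigroup.

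The main obstacle is the careful bookkeeping in the forward direction, especially in the $|Y|=1$ subcase, where one must both confirm that the constructed maps $e_1,e_2$ are well-defined idempotents of $T_{\mathbb{S}(Y)}(X)$ (in particular that $e_i(Y)\subseteq Y$ so that $(e_i)_{\upharpoonright_Y}$ makes sense and equals $I_Y\in\mathbb{S}(Y)$) and verify that they fail to commute at a specific point — the rest of the argument is a direct application of the previously established regularity theorem.
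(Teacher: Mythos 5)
Your proposal is correct, and it does reach the forward direction by a genuinely different route from the paper. The paper never invokes Theorem \ref{regular-semigroup-T-XY} in the forward direction: it takes an arbitrary idempotent $\alpha\in E(\mathbb{S}(Y))$ (available since (i) has just been established) and extends it to idempotents of $T_{\mathbb{S}(Y)}(X)$, first proving $|X\setminus Y|=1$ by sending $X\setminus Y$ to two different points \emph{outside} $Y$, and then proving $|Y|=1$ via a further sub-case split on $|R(\alpha)|$, in which the sub-case $|R(\alpha)|=1$ is settled not by non-commuting idempotents but by exhibiting a non-regular element through \cite[Theorem 2.2 (1)]{koni-sf22}. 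You instead apply Theorem \ref{regular-semigroup-T-XY} up front to reduce to the case where $\mathbb{S}(Y)$ is a subgroup of $\sym(Y)$; this buys you $I_Y\in\mathbb{S}(Y)$ (the identity of a subgroup of $\sym(Y)$ is its unique idempotent $I_Y$), so every map fixing $Y$ pointwise lies in $T_{\mathbb{S}(Y)}(X)$, and both of your cases ($|Y|\geq 2$ with witnesses $y_1,y_2\in Y$, and $|Y|=1$ with witnesses $z_1,z_2\in X\setminus Y$) are settled uniformly by a single pair of non-commuting idempotents each moving only one point; your verifications (that $e_i$ preserves $Y$, restricts to $I_Y$, is idempotent, and that the products disagree at the indicated point) all check out. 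Your version is shorter and needs only one tool (commuting idempotents via \cite[Theorem 5.1.1]{howie95}) in the idempotent analysis, at the cost of leaning on the regularity theorem; the paper's version is self-contained at that step and works with a generic idempotent of $\mathbb{S}(Y)$ rather than exploiting the group structure. The converses are essentially identical: for $|X|=2$ and $Y\neq X$ the semigroup is the two-element semilattice consisting of $I_X$ and the constant map onto the point of $Y$, which the paper phrases as ``regular with exactly two commuting idempotents.''
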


\begin{proof}[\textbf{Proof}]
Suppose that $T_{\mathbb{S}(Y)}(X)$ is an inverse semigroup. Then $\rm(i)$ is true by \cite[Proposition 2.1]{koni-sf22} and the fact that any homomorphic image of an inverse semigroup is an inverse semigroup (cf. \cite[Theorem 5.1.4]{howie95}).

\vspace{0.05cm}
To show $\rm(ii)$, we note that $T_{\mathbb{S}(Y)}(X)=\mathbb{S}(Y)$ if $Y=X$. Therefore $Y=X$ is a possible case. Assume that $Y \neq X$. First, we claim that $|X\setminus Y|=1$. Suppose to the contrary that there exist distinct $x_1,x_2\in X\setminus Y$. Choose $\alpha\in E(\mathbb{S}(Y))$ and define $f,g\in T(X)$ by
\begin{eqnarray*}
	xf=	
	\begin{cases}
		x\alpha   & \text{if $x\in Y$},\\
		x_1       & \text{if $x\in X\setminus Y$}
	\end{cases}
	\qquad \text{ and }\qquad
	xg=	
	\begin{cases}
		x\alpha   & \text{if $x\in Y$},\\
		x_2       & \text{if $x\in X\setminus Y$.}
	\end{cases}	
\end{eqnarray*}
It is routine to verify that $f, g\in E(T_{\mathbb{S}(Y)}(X))$. However, we see that $x_1(fg)\neq x_1(gf)$, and so $fg\neq gf$. This leads to a contradiction, because  $T_{\mathbb{S}(Y)}(X)$ is an inverse semigroup. Hence $|X\setminus Y|=1$. Write $X\setminus Y=\{z\}$.

\vspace{0.05cm}
Next, we claim that $|Y|=1$. Suppose to the contrary that $|Y|\geq 2$. Let $\beta\in E(\mathbb{S}(Y))$. There are two cases to consider.

\vspace{0.05cm}
\noindent\textbf{Case 1:} $|R(\beta)|=1$.
Write $R(\beta)=\{y_1\}$. Since $|Y|\geq 2$, there exists $y_2\in Y$ such that $y_2\neq y_1$. Define $h\in T(X)$ by 
\begin{eqnarray*}
	xh=	
	\begin{cases}
		x\beta   & \text{if $x\in Y$},\\
		y_2      & \text{if $x\in X\setminus Y$}.
	\end{cases}
\end{eqnarray*}
Clearly $h\in T_{\mathbb{S}(Y)}(X)$. However, we see that $R(\beta)\neq Y\cap R(h)$. Therefore $h\notin \reg(T_{\mathbb{S}(Y)}(X))$ by \cite[Theorem 2.2 (1)]{koni-sf22}, and so $T_{\mathbb{S}(Y)}(X)$ is not regular. This leads to a contradiction, because $T_{\mathbb{S}(Y)}(X)$ is an inverse semigroup.

\vspace{0.05cm}
\noindent\textbf{Case 2:} $|R(\beta)|\geq 2$. Choose distinct $y_1, y_2\in R(\beta)$. Define $f,g\in T(X)$ by
\begin{eqnarray*}
	xf=	
	\begin{cases}
		x\beta   & \text{if $x\in Y$},\\
		y_1      & \text{if $x\in X\setminus Y$}
	\end{cases}
	\qquad \text{ and }\qquad
	xg=	
	\begin{cases}
		x\beta   & \text{if $x\in Y$},\\
		y_2      & \text{if $x\in X\setminus Y$}.
	\end{cases}	
\end{eqnarray*}
It is routine to verify that $f, g\in E(T_{\mathbb{S}(Y)}(X))$. However, we see that $z(fg) \neq z(gf)$, and so $fg\neq gf$.
This leads to a contradiction, because  $T_{\mathbb{S}(Y)}(X)$ is an inverse semigroup.

\vspace{0.05cm}
Thus, in either case, we get a contradiction. Hence $|Y|=1$. Since $|X\setminus Y|=1$, we conclude that $|X|=2$.

\vspace{0.1cm}
Conversely, suppose that the given conditions hold. If $Y=X$, then $T_{\mathbb{S}(Y)}(X)=\mathbb{S}(Y)$, and so $T_{\mathbb{S}(Y)}(X)$ is an inverse semigroup by $\rm(i)$. Assume that $Y\neq X$. Then we have $|X|=2$ by (ii), and subsequently $Y$ is singleton. Therefore $\mathbb{S}(Y)$ is a subgroup of $\sym(Y)$, and so $T_{\mathbb{S}(Y)}(X)$ is regular by Theorem \ref{regular-semigroup-T-XY}. Further, since $|X| = 2$ and $|Y| = 1$, it is clear that
$T_{\mathbb{S}(Y)}(X)$ has exactly two idempotents which commute. Hence $T_{\mathbb{S}(Y)}(X)$ is an inverse semigroup.
\end{proof}


The following lemma is used in the next two results.

\begin{lemma}\label{map-trans}
Let $f\in T_{\mathbb{S}(Y)}(X)$. Then there exists a subset $T$ of $X$ such that $T$ and $T\cap Y$ are transversals of $\ker(f)$ and $\ker(f_{\upharpoonright_Y})$, respectively.
\end{lemma}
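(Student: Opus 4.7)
The plan is to build $T$ in two stages, first filling in a transversal for $\ker(f_{\upharpoonright_Y})$ inside $Y$, and then extending it to a full transversal of $\ker(f)$ by picking representatives only from $\ker(f)$-classes that miss $Y$ entirely. The key underlying observation is the following correspondence between the two kernels: for any $y_1, y_2 \in Y$ one has $(y_1, y_2) \in \ker(f_{\upharpoonright_Y})$ iff $y_1 f = y_2 f$ iff $(y_1, y_2) \in \ker(f)$, so each $\ker(f_{\upharpoonright_Y})$-class is precisely the intersection $C \cap Y$ for some $\ker(f)$-class $C$ with $C \cap Y \ne \varnothing$, and conversely every such intersection is a full $\ker(f_{\upharpoonright_Y})$-class.

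With that correspondence in hand, I would first fix a transversal $T_1 \subseteq Y$ of $\ker(f_{\upharpoonright_Y})$. By the observation above, the $\ker(f)$-classes that meet $Y$ are exactly the classes of the form $[y]_{\ker(f)}$ for $y \in T_1$, and each such class contains exactly one element of $T_1$. Let $\mathcal{F}$ denote the collection of $\ker(f)$-classes $C$ with $C \cap Y = \varnothing$; note $\mathcal{F} \subseteq X \setminus Y$ classwise. Using the axiom of choice, select one element $t_C \in C$ for each $C \in \mathcal{F}$, and set $T_2 = \{t_C : C \in \mathcal{F}\}$.

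Finally I would put $T = T_1 \cup T_2$ and verify the two required properties. Every $\ker(f)$-class $C$ contributes exactly one element to $T$: if $C \cap Y \ne \varnothing$ then $|C \cap T_1| = 1$ and $C \cap T_2 = \varnothing$ (since $T_2 \subseteq X\setminus Y$ while $C$ is represented in $Y$), and if $C \cap Y = \varnothing$ then $|C \cap T_2| = 1$ and $C \cap T_1 = \varnothing$. This gives that $T$ is a transversal of $\ker(f)$. For the second property, $T \cap Y = (T_1 \cap Y) \cup (T_2 \cap Y) = T_1 \cup \varnothing = T_1$, which was chosen to be a transversal of $\ker(f_{\upharpoonright_Y})$.

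There is no real obstacle here; the proof is a structural observation followed by a straightforward two-stage choice, and the only subtlety is remembering to invoke choice to select the representatives from classes disjoint from $Y$ and to verify that the two partial transversals do not interfere with each other.
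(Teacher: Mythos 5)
Your proof is correct and is essentially the paper's own argument in different clothing: the paper builds $T_1$ by choosing, for each $z\in R(f_{\upharpoonright_Y})$, a preimage in $zf^{-1}\cap Y$, and then adjoins one preimage of each $z\in R(f)\setminus R(f_{\upharpoonright_Y})$, which is exactly your choice of one representative from each $\ker(f)$-class meeting $Y$ (taken inside $Y$) followed by one from each class disjoint from $Y$, since the $\ker(f)$-classes are precisely the fibers $zf^{-1}$. The only cosmetic difference is that you phrase the bookkeeping via the class correspondence while the paper indexes everything by image points; no gap either way.
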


\begin{proof}[\textbf{Proof}]
Observe that $xf^{-1}\cap Y\neq \varnothing$ for all $x\in R(f_{\upharpoonright_Y})$. Therefore for each $x\in R(f_{\upharpoonright_Y})$, fix $x'\in xf^{-1}\cap Y$, and let $T_1=\{x'\colon x\in R(f_{\upharpoonright_Y})\}$. By construction of $T_1$, it is clear that $T_1$ is a transversal of $\ker(f_{\upharpoonright_Y})$. Now we consider  $R(f)\setminus R(f_{\upharpoonright_Y})$. There are two cases.

\vspace{0.05cm}
\noindent\textbf{Case 1:} $R(f)\setminus R(f_{\upharpoonright_Y})=\varnothing$. Take $T=T_1$. Clearly $T\cap Y=T_1$. Thus $T$ and $T\cap Y$ are transversals of $\ker(f)$ and $\ker(f_{\upharpoonright_Y})$, respectively.

\vspace{1mm}
\noindent\textbf{Case 2:} $R(f)\setminus R(f_{\upharpoonright_Y})\neq \varnothing$. Clearly $xf^{-1}\neq \varnothing$ for all $x\in R(f)\setminus R(f_{\upharpoonright_Y})$. Therefore for each $x\in R(f)\setminus R(f_{\upharpoonright_Y})$, fix $x''\in xf^{-1}$, and let $T=T_1\cup\{x''\colon x\in R(f)\setminus R(f_{\upharpoonright_Y})\}$. It is routine to verify that $T$ is a transversal of $\ker(f)$. Now, we see that $T_1=T\cap Y$. Thus $T$ and $T \cap Y$ are transversals of $\ker(f)$ and $\ker(f_{\upharpoonright_Y})$, respectively.
\end{proof}


If $I_Y \in \mathbb{S}(Y)$, then it is clear that $I_X \in T_{\mathbb{S}(Y)}(X)$. The following theorem characterizes unit-regular elements in $T_{\mathbb{S}(Y)}(X)$ when $I_Y \in \mathbb{S}(Y)$.

\begin{theorem}\label{unit-regular elment T-XY}
Let $\mathbb{S}(Y)$ be a subsemigroup of $T(Y)$ such that $I_Y\in \mathbb{S}(Y)$, and let $f\in T_{\mathbb{S}(Y)}(X)$. Then $f\in \ureg(T_{\mathbb{S}(Y)}(X))$ if and only if 
\begin{enumerate}
	\item[\rm(i)] $f_{\upharpoonright_Y}\in \ureg(\mathbb{S}(Y))$;
	\item[\rm(ii)] $R(f)\cap Y=R(f_{\upharpoonright_Y})$;
	\item[\rm(iii)] $|C(f)\setminus C(f_{\upharpoonright_Y})|=|D(f)\setminus D(f_{\upharpoonright_Y})|$, where $C(f)=X\setminus T_f$ and $C(f_{\upharpoonright_Y})=Y\setminus T_{(f_{\upharpoonright_Y})}$ for some transversals $T_f$ and $T_{(f_{\upharpoonright_Y})}$ of $\ker(f)$ and $\ker(f_{\upharpoonright_Y})$, respectively, such that $Y\cap T_f = T_{(f_{\upharpoonright_Y})}$.
\end{enumerate}
\end{theorem}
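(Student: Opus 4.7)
The plan is to prove the two implications of the biconditional. Throughout I will use the observation that a unit $u \in U(T_{\mathbb{S}(Y)}(X))$ is precisely a bijection $u\colon X\to X$ satisfying $Yu=Y$ and $u_{\upharpoonright_Y}\in U(\mathbb{S}(Y))$; indeed, $u^{-1}\in T_{\mathbb{S}(Y)}(X)$ forces both $Yu\subseteq Y$ and $Yu^{-1}\subseteq Y$, hence $Yu=Y$, and the restrictions then witness that $u_{\upharpoonright_Y}$ is a unit of $\mathbb{S}(Y)$.

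For the forward direction, suppose $fuf=f$ with $u$ a unit. Restricting to $Y$ gives $f_{\upharpoonright_Y}\,u_{\upharpoonright_Y}\,f_{\upharpoonright_Y}=f_{\upharpoonright_Y}$, which proves (i). For (ii), the nontrivial inclusion $R(f)\cap Y\subseteq R(f_{\upharpoonright_Y})$ follows by writing $y=xf\in Y$, applying $xfuf=xf$ to get $y=(yu)f$, and noting that $yu\in Yu=Y$. For (iii), set $T_f:=R(f)u$ and $T_{(f_{\upharpoonright_Y})}:=R(f_{\upharpoonright_Y})u_{\upharpoonright_Y}$; the identity $(ru)f=r$ for $r\in R(f)$ shows these are transversals of $\ker(f)$ and $\ker(f_{\upharpoonright_Y})$ respectively. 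Combining (ii) with $Yu=Y$ gives $Y\cap T_f=T_{(f_{\upharpoonright_Y})}$, and then $D(f_{\upharpoonright_Y})=D(f)\cap Y$ and $C(f_{\upharpoonright_Y})=C(f)\cap Y$ both hold. Since $u$ restricts to a bijection $X\setminus Y\to X\setminus Y$ that carries $D(f)$ to $C(f)$, it bijects $D(f)\setminus Y=D(f)\setminus D(f_{\upharpoonright_Y})$ with $C(f)\setminus Y=C(f)\setminus C(f_{\upharpoonright_Y})$, giving (iii).

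For the converse, assume (i)--(iii). Pick $v\in U(\mathbb{S}(Y))$ witnessing (i). Using (ii) and $Yf\subseteq Y$, every $r\in R(f)\setminus R(f_{\upharpoonright_Y})$ lies in $X\setminus Y$ and has $rf^{-1}\subseteq X\setminus Y$; choose any $r'\in rf^{-1}$. Define a section $g\colon R(f)\to X$ by $g(r)=rv$ on $R(f_{\upharpoonright_Y})$ and $g(r)=r'$ otherwise, so that $gf=I_{R(f)}$ and $g(R(f))\cap Y=R(f_{\upharpoonright_Y})v$. Lemma \ref{map-trans} guarantees compatible transversals exist, and under (ii) condition (iii) reduces (independently of the choice) to $|D(f)\setminus Y|=|C(f)\setminus Y|$; fix any bijection $\phi\colon D(f)\setminus Y\to C(f)\setminus Y$. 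Now define $u\colon X\to X$ by $u=v$ on $Y$, $u=g$ on $R(f)\setminus Y$, and $u=\phi$ on $D(f)\setminus Y$. The three pieces have disjoint domains covering $X$ and disjoint images covering $X$, so $u$ is a bijection; since $u_{\upharpoonright_Y}=v\in U(\mathbb{S}(Y))$, we have $u\in U(T_{\mathbb{S}(Y)}(X))$, and $fuf=f$ follows from $gf=I_{R(f)}$.

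The main obstacle is the bookkeeping that translates the existentially quantified condition (iii) into the concrete cardinality $|D(f)\setminus Y|=|C(f)\setminus Y|$ that the construction actually needs. The linchpin is that, under (ii) together with the compatibility $Y\cap T_f=T_{(f_{\upharpoonright_Y})}$, the sets $C(f)\setminus C(f_{\upharpoonright_Y})$ and $D(f)\setminus D(f_{\upharpoonright_Y})$ collapse to $C(f)\setminus Y$ and $D(f)\setminus Y$ regardless of the compatible transversals chosen; once this is observed, both directions proceed from the same picture of assembling a permutation of $X$ out of a chosen unit of $\mathbb{S}(Y)$ on $Y$ and a free bijection of $X\setminus Y$ extending a section of $f$.
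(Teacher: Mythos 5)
Your proof is correct and, for the converse, follows essentially the same construction as the paper: a unit of $T_{\mathbb{S}(Y)}(X)$ is assembled piecewise from a unit of $\mathbb{S}(Y)$ on $Y$, a section of $f$ on $R(f)\setminus Y$, and a bijection $D(f)\setminus Y\to C(f)\setminus Y$ supplied by (iii). The one genuine difference is in the forward direction: the paper obtains (ii) and (iii) by observing $f\in \ureg(\overline{T}(X,Y))$ and citing Theorem 4.2 of \cite{shubh-lma22}, whereas you prove them directly by exhibiting the compatible transversals $T_f=R(f)u$ and $T_{(f_{\upharpoonright_Y})}=R(f_{\upharpoonright_Y})u_{\upharpoonright_Y}$ and noting that $u$ restricts to a bijection of $X\setminus Y$ carrying $D(f)$ onto $C(f)$; this makes the argument self-contained, at no extra cost. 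One small point worth spelling out in your converse: because you choose your own representatives $r'$ rather than using the transversal furnished by (iii) (as the paper does), you rely on the claim that $|C(f)\setminus Y|=|(X\setminus Y)\setminus (T_f\setminus Y)|$ is the same for every compatible transversal $T_f$. This is true --- decompose $X\setminus Y$ into the traces of the $\ker(f)$-classes and note that removing one representative from each class $rf^{-1}$ with $r\in R(f)\setminus Y$ leaves a set whose cardinality does not depend on which representative is removed --- but it deserves a line of justification, since for infinite sets the cardinality of a complement is not in general determined by the cardinality of the subset.
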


\begin{proof}[\textbf{Proof}]
Suppose that $f\in \ureg(T_{\mathbb{S}(Y)}(X))$. Then there exists $g\in U(T_{\mathbb{S}(Y)}(X))$ such that $fgf = f$. It follows that $g_{\upharpoonright_Y}\in U(\mathbb{S}(Y))$ and $f_{\upharpoonright_Y} g_{\upharpoonright_Y} f_{\upharpoonright_Y} = f_{\upharpoonright_Y}$, and so $f_{\upharpoonright_Y}\in \ureg(\mathbb{S}(Y))$. Thus $\rm(i)$ holds.	

\vspace{0.05cm}
To show (ii) and (iii), we notice that $T_{T(Y)}(X)$ is the semigroup $\overline{T}(X, Y) = \{f\in T(X)\colon Yf \subseteq Y\}$ under composition. Since $f\in \ureg(T_{\mathbb{S}(Y)}(X))$ and $T_{\mathbb{S}(Y)}(X) \subseteq T_{T(Y)}(X)$, we have $f\in \ureg(\overline{T}(X, Y))$. Hence $\rm(ii)$ and $\rm(iii)$ are directly followed by \cite[Theorem 4.2]{shubh-lma22}.

\vspace{0.1cm}
Conversely, suppose that $f$ satisfies \rm(i)--\rm(iii). By $\rm(i)$, there exists $g_0\in U(\mathbb{S}(Y))$ such that $f_{\upharpoonright_Y} g_0 f_{\upharpoonright_Y}=f_{\upharpoonright_Y}$. 

\vspace{0.05cm}
Now, we observe that $R(f)\setminus Y\neq\varnothing$ if and only if $T_f\setminus Y\neq \varnothing$. If $R(f)\setminus Y\neq \varnothing$, then we show that $g_1\colon R(f)\setminus Y\to T_f\setminus Y$ defined by $xg_1=x'$, where $x'\in T_f\cap xf^{-1}$, is bijective. Clearly $g_1$ is injective, since $x_1f^{-1}\cap x_2f^{-1}=\varnothing$ for distinct $x_1,x_2\in R(f)$. To show $g_1$ is surjective, let $x\in T_f\setminus Y$ and write $z = xf$. Certainly $z\in R(f)$. By (ii), note that $R(f)\cap Y=R(f_{\upharpoonright_Y})$. Therefore, since $T_{(f_{\upharpoonright_Y})}=Y\cap T_f$, we have $z\in R(f)\setminus Y$. Also, we see that $zg_1=x$, and so $g_1$ is surjective.

\vspace{0.05cm}
By $\rm(iii)$, there is a bijection $g_2\colon D(f)\setminus D(f_{\upharpoonright_Y})\to C(f)\setminus C(f_{\upharpoonright_Y})$. Notice that $X=Y\cup \big(R(f)\setminus Y\big)\cup \big(D(f)\setminus D(f_{\upharpoonright_Y})\big)$ and also $X=Y\cup \big(T_f\setminus Y\big)\cup \big(C(f)\setminus C(f_{\upharpoonright_Y})\big)$. Define $g\in T(X)$ by
\begin{equation*}
	xg=
	\begin{cases}
		xg_0    & \text{if $x\in Y$},\\
		xg_1    & \text{if $x\in R(f)\setminus Y$},\\
		xg_2    & \text{if $x\in D(f)\setminus D(f_{\upharpoonright_Y})$.}\\
	\end{cases}
\end{equation*}
It is routine to verify that $g$ is bijective, $g\in T_{\mathbb{S}(Y)}(X)$, and $fgf = f$. Hence $f\in \ureg(T_{\mathbb{S}(Y)}(X))$.
\end{proof}


The following proposition gives a sufficient condition for $T_{\mathbb{S}(Y)}(X)$ to be unit-regular.

\begin{proposition}\label{subgroup-unit-regular}
If $\mathbb{S}(Y)$ is a subgroup of $\sym(Y)$ and $X\setminus Y$ is finite, then $T_{\mathbb{S}(Y)}(X)$ is unit-regular.
\end{proposition}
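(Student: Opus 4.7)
The plan is to apply Theorem \ref{unit-regular elment T-XY} to an arbitrary $f \in T_{\mathbb{S}(Y)}(X)$ by verifying conditions (i)--(iii). Throughout, $\mathbb{S}(Y)$ is a subgroup of $\sym(Y)$ and $X \setminus Y$ is finite; in particular, $I_Y \in \mathbb{S}(Y)$, so the theorem applies.

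For condition (i), since $f_{\upharpoonright_Y}$ lies in the group $\mathbb{S}(Y)$, it is a unit, hence trivially unit-regular with $(f_{\upharpoonright_Y})^{-1}$ as witness. For (ii), bijectivity of $f_{\upharpoonright_Y}$ on $Y$ gives $R(f_{\upharpoonright_Y}) = Y$, and the inclusion $Y = Y f_{\upharpoonright_Y} \subseteq R(f)$ forces $R(f) \cap Y = Y = R(f_{\upharpoonright_Y})$.

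Condition (iii) is where the finiteness of $X \setminus Y$ enters. Because $f_{\upharpoonright_Y}$ is injective on $Y$, the unique transversal of $\ker(f_{\upharpoonright_Y})$ is $Y$ itself, so we take $T_{(f_{\upharpoonright_Y})} = Y$ and hence $C(f_{\upharpoonright_Y}) = D(f_{\upharpoonright_Y}) = \varnothing$. Distinct elements of $Y$ remain $\ker(f)$-inequivalent in $X$ (same argument), so we may extend $Y$ to a transversal $T_f = Y \cup T'$ of $\ker(f)$ with $T' \subseteq X \setminus Y$, guaranteeing $Y \cap T_f = T_{(f_{\upharpoonright_Y})}$ as required by the theorem. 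Then $C(f) = (X \setminus Y) \setminus T'$, which is finite.

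To finish I will show $|C(f)| = |D(f)|$. Since $R(f) \supseteq Y$, setting $R^{\ast} = R(f) \cap (X \setminus Y)$ gives $D(f) = (X \setminus Y) \setminus R^{\ast}$, which is also finite. I claim the map $T' \to R^{\ast}$, $t \mapsto tf$, is a bijection. For any $t \in T'$, its $\ker(f)$-class misses $Y$ (otherwise $Y$ would contribute the transversal representative), so $tf \notin Y f_{\upharpoonright_Y} = Y$; thus $tf \in R^{\ast}$. Injectivity is immediate from $T'$ being a transversal. For surjectivity, any $x \in R^{\ast}$ has $xf^{-1} \subseteq X \setminus Y$ (if $y \in Y \cap xf^{-1}$ then $x = yf \in Y$, contradiction), so its $\ker(f)$-class lies entirely in $X \setminus Y$ and thus contains exactly one element of $T'$ mapping to $x$. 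Hence $|T'| = |R^{\ast}|$, and subtracting from $|X \setminus Y|$ yields $|C(f)| = |D(f)|$. The main (mild) obstacle is choosing $T_f$ compatibly with $T_{(f_{\upharpoonright_Y})}$ while handling the possibly infinite $Y$; the key observation is that the bijection $T' \to R^{\ast}$ lives entirely inside the finite set $X \setminus Y$, so cardinal arithmetic causes no trouble.
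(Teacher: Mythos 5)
Your proof is correct and follows essentially the same route as the paper's: both verify conditions (i)--(iii) of Theorem \ref{unit-regular elment T-XY}, choose a transversal of $\ker(f)$ containing $Y$ so that it is compatible with the unique transversal $Y$ of $\ker(f_{\upharpoonright_Y})$, and establish $|T_f\setminus Y|=|R(f)\setminus Y|$ by a bijection (yours is $t\mapsto tf$, the paper's is its inverse) before concluding by finite cardinal arithmetic on $X\setminus Y$. The only cosmetic difference is that you build the compatible transversal directly instead of invoking Lemma \ref{map-trans}.
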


\begin{proof}[\textbf{Proof}]
Let $f\in T_{\mathbb{S}(Y)}(X)$. Then $f_{\upharpoonright_Y} \in \mathbb{S}(Y)$. Since $\mathbb{S}(Y)$ is a subgroup of $\sym(Y)$, it is clear that $f_{\upharpoonright_Y}\in \ureg(\mathbb{S}(Y))$ and $R(f)\cap Y=R(f_{\upharpoonright_Y})$. Thus $f$ satisfies the conditions (i) and (ii) of Theorem \ref{unit-regular elment T-XY}. 

\vspace{0.05cm}
By Lemma \ref{map-trans}, let $T_f$ and $T_{(f_{\upharpoonright_Y})}$ be transversals of $\ker(f)$ and $\ker(f_{\upharpoonright_Y})$, respectively, such that $T_f\cap Y=T_{(f_{\upharpoonright_Y})}$. Since $f_{\upharpoonright_Y}\in \mathbb{S}(Y)$ and $\mathbb{S}(Y)$ is a subgroup of $\sym(Y)$, we have $T_{(f_{\upharpoonright_Y})}= Y=R(f_{\upharpoonright_Y})$. Therefore $Y\subseteq T_f$ and $Y\subseteq R(f)$. Since $X\setminus Y$ is finite, it follows that both $T_f\setminus Y$ and $R(f)\setminus Y$ are finite.

\vspace{0.05cm}
Claim that $|T_f\setminus Y|=|R(f)\setminus Y|$. Notice that $R(f)\setminus Y\neq\varnothing$ if and only if $T_f\setminus Y\neq \varnothing$. If $R(f)\setminus Y\neq \varnothing$, then we show that $\alpha\colon R(f)\setminus Y\to T_f\setminus Y$ defined by $x\alpha=x'$, where $x'\in T_f\cap xf^{-1}$, is bijective. Clearly $\alpha$ is injective, since $x_1f^{-1}\cap x_2f^{-1}=\varnothing$ for distinct $x_1,x_2\in R(f)$. To show $\alpha$ is surjective, let $x\in T_f\setminus Y$  and write $z =xf$. Certainly $z\in R(f)$. 
Since $R(f)\cap Y=R(f_{\upharpoonright_Y})$ and $T_{(f_{\upharpoonright_Y})}=Y\cap T_f$, it follows that $z\in R(f)\setminus Y$. Also, we see that $z\alpha=x$, and so $\alpha$ is surjective. Thus $|T_f\setminus Y|=|R(f)\setminus Y|$.

\vspace{0.05cm}
Observe that $D(f)= (X\setminus Y)\setminus (R(f)\setminus Y)$. Write $C(f) = X\setminus T_f$ and $C(f_{\upharpoonright_Y})=Y\setminus T_{(f_{\upharpoonright_Y})}$. It is clear that $C(f) = (X\setminus Y)\setminus (T_f\setminus Y)$. Since $X\setminus Y$, $T_f\setminus Y$, and $R(f)\setminus Y$ are finite, and $|T_f\setminus Y|=|R(f)\setminus Y|$, we obtain
\[|C(f)|=|X\setminus Y|-|T_f\setminus Y|=|X\setminus Y|-|R(f)\setminus Y|=|(X\setminus Y)\setminus \big(R(f)\setminus Y\big)| = D(f).\]
Since $f_{\upharpoonright_Y}$ is a bijection on $Y$, we have $C(f_{\upharpoonright_Y})=\varnothing$ and $D(f_{\upharpoonright_Y})=\varnothing$. Therefore $|C(f)\setminus C(f_{\upharpoonright_Y})|=|C(f)| = |D(f)| = |D(f)\setminus D(f_{\upharpoonright_Y})|$. Thus $f$ also satisfies the condition (iii) of Theorem \ref{unit-regular elment T-XY}, and so $f\in \ureg(T_{\mathbb{S}(Y)}(X))$ by Theorem \ref{unit-regular elment T-XY}. Hence, since $f$ is arbitrary, we conclude that $T_{\mathbb{S}(Y)}(X)$ is a unit-regular semigroup.

\end{proof}

The next theorem gives a necessary and sufficient condition for $T_{\mathbb{S}(Y)}(X)$ to be unit-regular if $I_Y\in \mathbb{S}(Y)$. 
\begin{theorem}
Let $\mathbb{S}(Y)$ be a subsemigroup of $T(Y)$ such that $I_Y\in \mathbb{S}(Y) $. Then $T_{\mathbb{S}(Y)}(X)$ is unit-regular if and only if one of the following holds:
\begin{enumerate}
	\item[\rm(i)] $\mathbb{S}(Y)$ is a subgroup of $\sym(Y)$ and $X\setminus Y$ is finite.
	\item[\rm(ii)] $\mathbb{S}(Y)$ is unit-regular and $Y=X$.
\end{enumerate}
\end{theorem}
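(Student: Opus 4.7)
My plan is to prove the theorem by splitting into the two directions, where the forward direction leans on Theorem \ref{regular-semigroup-T-XY} together with a tailored non-unit-regular example, and the converse just invokes Proposition \ref{subgroup-unit-regular} or the tautology $T_{\mathbb{S}(Y)}(X)=\mathbb{S}(Y)$.

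For the converse, if \rm(i) holds then Proposition \ref{subgroup-unit-regular} gives unit-regularity immediately. If \rm(ii) holds then $Y=X$ forces $T_{\mathbb{S}(Y)}(X)=\mathbb{S}(Y)$, and unit-regularity of $\mathbb{S}(Y)$ transfers trivially. So the converse is essentially one line per case.

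For the forward direction, assume $T_{\mathbb{S}(Y)}(X)$ is unit-regular; then it is in particular regular, so by Theorem \ref{regular-semigroup-T-XY} we are in one of two situations. If $\mathbb{S}(Y)$ is regular and $Y=X$, then $T_{\mathbb{S}(Y)}(X)=\mathbb{S}(Y)$ inherits unit-regularity, giving \rm(ii). Otherwise $\mathbb{S}(Y)$ is a subgroup of $\sym(Y)$, and what remains is to show $X\setminus Y$ is finite (if $Y=X$ this is automatic). Assume toward a contradiction that $X\setminus Y$ is infinite. The plan is to exhibit $f\in T_{\mathbb{S}(Y)}(X)$ that violates condition \rm(iii) of Theorem \ref{unit-regular elment T-XY} (while trivially satisfying \rm(i) and \rm(ii)), which will contradict unit-regularity.

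The concrete construction: since $X\setminus Y$ is infinite, fix a countably infinite sequence $z_0,z_1,z_2,\ldots$ of distinct elements of $X\setminus Y$, and define $f\in T(X)$ by $yf=y$ for every $y\in Y$, $z_if=z_{i+1}$ for $i\geq 0$, and $xf=x$ for every $x\in X\setminus\bigl(Y\cup\{z_0,z_1,\ldots\}\bigr)$. Then $f_{\upharpoonright_Y}=I_Y\in\mathbb{S}(Y)$, so $f\in T_{\mathbb{S}(Y)}(X)$; moreover $f_{\upharpoonright_Y}=I_Y\in\ureg(\mathbb{S}(Y))$ and $R(f)\cap Y=Y=R(f_{\upharpoonright_Y})$, so conditions \rm(i) and \rm(ii) of Theorem \ref{unit-regular elment T-XY} are met. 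The key point is that $f$ is injective, so $\ker(f)$ is trivial and any transversal $T_f$ equals $X$; consequently $C(f)=\varnothing$ and $|C(f)\setminus C(f_{\upharpoonright_Y})|=0$. On the other hand $R(f)=X\setminus\{z_0\}$, so $D(f)=\{z_0\}$ and (noting $D(f_{\upharpoonright_Y})=\varnothing$) we get $|D(f)\setminus D(f_{\upharpoonright_Y})|=1$. Thus condition \rm(iii) fails, so $f\notin\ureg(T_{\mathbb{S}(Y)}(X))$ by Theorem \ref{unit-regular elment T-XY}, contradicting the assumed unit-regularity of $T_{\mathbb{S}(Y)}(X)$. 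Hence $X\setminus Y$ must be finite, completing \rm(i).

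The main obstacle is picking an $f$ whose failure of unit-regularity really does not require any extra hypothesis on $\mathbb{S}(Y)$: the shift trick works because $\mathbb{S}(Y)$ is only used through $f_{\upharpoonright_Y}=I_Y\in\mathbb{S}(Y)$, which is guaranteed by the theorem's hypothesis, and the infiniteness of $X\setminus Y$ is exactly what lets the shift be injective yet non-surjective so that the transversal/image cardinalities decouple.
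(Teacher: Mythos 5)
Your proposal is correct and follows essentially the same route as the paper: both directions hinge on Proposition \ref{subgroup-unit-regular}, Theorem \ref{regular-semigroup-T-XY}, and a counterexample violating condition (iii) of Theorem \ref{unit-regular elment T-XY} built from an injective, non-surjective self-map of $X\setminus Y$ extended by the identity on $Y$ (the paper uses an abstract such map $\alpha$ where you use an explicit shift, and it reaches the dichotomy by nested contradictions where you invoke Theorem \ref{regular-semigroup-T-XY} up front, but these are cosmetic differences). Your observation that injectivity of $f$ forces $T_f=X$, so that condition (iii) is tested against the unique admissible transversal, is exactly the point the paper makes as well.
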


\begin{proof}[\textbf{Proof}]
Suppose that $T_{\mathbb{S}(Y)}(X)$ is unit-regular. By Proposition \ref{subgroup-unit-regular}, we see that $\rm(i)$ may hold. Let us assume that $\rm(i)$ does not hold.

\vspace{0.05cm}	
Since $T_{\mathbb{S}(Y)}(X)$ is unit-regular, it follows that $\mathbb{S}(Y)$ is unit-regular by \cite[Propsition 2.1]{koni-sf22} and the fact that any homomorphic image of a unit-regular semigroup is unit-regular (cf. \cite[Proposition 2.7]{hick97}).

\vspace{0.05cm}
To show $Y = X$, suppose to the contrary that $Y\neq X$. First, we claim that
$\mathbb{S}(Y)$ is not a subgroup of $\sym(Y)$. Suppose to the contrary that $\mathbb{S}(Y)$ is a subgroup of $\sym(Y)$. Then we next claim that $X\setminus Y$ is finite.  Suppose to the contrary that $X\setminus Y$ is infinite. Then there exists $\alpha\colon X\setminus Y\to X\setminus Y$ which is injective but not surjective. Define $f\in T(X)$ by
\begin{align*}
	xf=
	\begin{cases}
		x       & \text{if $x\in Y$},\\
		x\alpha & \text{if $x\in X\setminus Y$.}	
	\end{cases}
\end{align*}
Clearly $f_{\upharpoonright_Y} = I_Y$ and $f\in T_{\mathbb{S}(Y)}(X)$. Moreover, we see that $f$ is injective. It follows that $Y$ and $X$ are only transversals of $\ker(f_{\upharpoonright_Y})$ and $\ker(f)$, respectively. Therefore $C(f) = X\setminus T_f = \varnothing$ and $C(f_{\upharpoonright_Y}) = Y\setminus T_{(f_{\upharpoonright_Y})} = \varnothing$, and so $|C(f)\setminus C(f_{\upharpoonright_Y})| = 0$. Notice that $f_{\upharpoonright_Y}$ is surjective but $f$ is not surjective. Therefore $D(f_{\upharpoonright_Y}) = \varnothing$ but $D(f)\neq \varnothing$, and so $|D(f)\setminus D(f_{\upharpoonright_Y})| \neq 0$.
Thus $f$ does not satisfy Theorem \ref{unit-regular elment T-XY}(iii), and hence $f\notin \ureg(T_{\mathbb{S}(Y)}(X))$ by Theorem \ref{unit-regular elment T-XY}, which contradicts the unit-regularity of $T_{\mathbb{S}(Y)}(X)$. Thus $X\setminus Y$ is finite, and subsequently $\rm(i)$ holds. This contradicts the assumption that $\rm(i)$ does not hold. Therefore $\mathbb{S}(Y)$ is not a subgroup of $\sym(Y)$. Hence, since $T_{\mathbb{S}(Y)}(X)$ is regular, we have $Y=X$ by Theorem \ref{regular-semigroup-T-XY}.

\vspace{0.05cm}
For the converse, suppose first that $\rm(i)$ holds. Then $T_{\mathbb{S}(Y)}(X)$ is unit-regular by Proposition \ref{subgroup-unit-regular}. Next, suppose that $\rm(ii)$ holds. Since $Y = X$, we have $T_{\mathbb{S}(Y)}(X)=\mathbb{S}(X)$, and so $T_{\mathbb{S}(Y)}(X)$ is a unit-regular semigroup by hypothesis.
\end{proof}

\section{The semigroup $L_{\mathbb{S}(W)}(V)$} 

In this section, we characterize regular elements in $L_{\mathbb{S}(W)}(V)$ and then determine when $L_{\mathbb{S}(W)}(V)$ is a regular semigroup. If $I_W \in \mathbb{S}(W)$, then we characterize unit-regular elements in $L_{\mathbb{S}(W)}(V)$ and then determine when $L_{\mathbb{S}(W)}(V)$ is a unit-regular semigroup. We also determine when $L_{\mathbb{S}(W)}(V)$ is a completely regular semigroup, and when $L_{\mathbb{S}(W)}(V)$ is an inverse semigroup. Throughout this section, we assume that $W$ is a subspace of $V$ and $\mathbb{S}(W)$ is a subsemigroup of $L(W)$. We begin with the following proposition.


\begin{proposition}\label{epimorphism-L-VW}
Let $\mathbb{S}(W)$ be a subsemigroup of $L(W)$. Then $\varphi \colon L_{\mathbb{S}(W)}(V)\to \mathbb{S}(W)$ defined by $f\varphi= f_{\upharpoonright_W}$ is an epimorphism. Moreover, if $I_V\in  L_{\mathbb{S}(W)}(V)$, then $(I_V)\varphi=I_W$.
\end{proposition}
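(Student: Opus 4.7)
The plan is to verify the three standard ingredients of an epimorphism claim in turn: well-definedness, multiplicativity, and surjectivity, plus the trivial "moreover" clause at the end.

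First I would note that $\varphi$ is well-defined by the very definition of $L_{\mathbb{S}(W)}(V)$: for $f\in L_{\mathbb{S}(W)}(V)$, we have $f_{\upharpoonright_W}\in\mathbb{S}(W)$, so $f\varphi$ lies in the codomain. Next I would check that $\varphi$ is a homomorphism. Given $f,g\in L_{\mathbb{S}(W)}(V)$, both $f$ and $g$ map $W$ into $W$ (since their restrictions are defined as elements of $L(W)\subseteq T(W)$), so $fg$ also maps $W$ into $W$, and for every $w\in W$ one has $w(fg)_{\upharpoonright_W}=w(fg)=(wf)g=(wf_{\upharpoonright_W})g_{\upharpoonright_W}=w(f_{\upharpoonright_W}\,g_{\upharpoonright_W})$. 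Hence $(fg)\varphi=(fg)_{\upharpoonright_W}=f_{\upharpoonright_W}\,g_{\upharpoonright_W}=(f\varphi)(g\varphi)$.

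The only step that needs a real argument is surjectivity. Let $\alpha\in\mathbb{S}(W)$. Since $W$ is a subspace of the vector space $V$, I would pick a complementary subspace $W'$ of $W$ in $V$, so that $V=W\oplus W'$ (this uses the standard basis-extension/Zorn argument). Every $v\in V$ has a unique decomposition $v=w+w'$ with $w\in W$ and $w'\in W'$, and I would define $f\colon V\to V$ by $vf=w\alpha$. Linearity of $f$ follows from the linearity of $\alpha$ together with the linearity of the projection onto $W$ along $W'$. By construction $f$ sends $W$ into $W$ and $f_{\upharpoonright_W}=\alpha\in\mathbb{S}(W)$, so $f\in L_{\mathbb{S}(W)}(V)$ and $f\varphi=\alpha$.

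Finally, for the "moreover" clause, if $I_V\in L_{\mathbb{S}(W)}(V)$ then by definition $(I_V)_{\upharpoonright_W}$ is the linear transformation on $W$ agreeing with $I_V$ on $W$, which is $I_W$; thus $(I_V)\varphi=I_W$. The only subtlety in the whole argument is the use of a complementary subspace to extend $\alpha$, but this is a routine linear-algebra fact, so I do not anticipate a real obstacle.
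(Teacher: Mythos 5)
Your proposal is correct and follows essentially the same route as the paper: the multiplicativity check is the identical observation that $(fg)_{\upharpoonright_W}=f_{\upharpoonright_W}g_{\upharpoonright_W}$, and your surjectivity argument via a complementary subspace is just an explicit version of the paper's one-line step ``extend $\alpha$ to a linear operator on $V$.'' No gaps; your write-up merely spells out the extension that the paper leaves implicit.
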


\begin{proof}[\textbf{Proof}]
Observe that $(fg)_{\upharpoonright_W} = f_{\upharpoonright_W} g_{\upharpoonright_W}$ for all $f,g\in L_{\mathbb{S}(W)}(V)$. Clearly $\varphi \colon L_{\mathbb{S}(W)}(V)\to \mathbb{S}(W)$ is a homomorphism, since 
\[(fg)\varphi=(fg)_{\upharpoonright_W} = f_{\upharpoonright_W} g_{\upharpoonright_W}=(f\varphi)(g\varphi)\]
for all $f,g\in L_{\mathbb{S}(W)}(V)$. To show $\varphi$ is surjective, let $\alpha\in \mathbb{S}(W)$. Extend $\alpha$ to a linear operator, say $f$ on $V$. Then we see that $f\varphi=f_{\upharpoonright_W}=\alpha$, and so $\varphi$ is surjective. Thus $\varphi$ is an epimorphism. Further, if $I_V\in  L_{\mathbb{S}(W)}(V)$, then it is clear that $(I_V)\varphi=I_W$.		
\end{proof}	


%
%
%
%

The next theorem gives a characterization of regular elements in $L_{\mathbb{S}(W)}(V)$.

\begin{theorem}\label{regular-element-L-VW}
Let $\mathbb{S}(W)$ be a subsemigroup of $L(W)$ and $f\in L_{\mathbb{S}(W)}(V)$. Then $f\in \reg(L_{\mathbb{S}(W)}(V))$ if and only if 
\begin{enumerate}
	\item[\rm(i)] $f_{\upharpoonright_W}\in \reg(\mathbb{S}(W))$;
	\item[\rm(ii)] $R(f)\cap W=R(f_{\upharpoonright_W})$.
\end{enumerate}
\end{theorem}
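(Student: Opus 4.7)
The plan is to mirror the proof of the set-case result (Theorem 2.2(1) of \cite{koni-sf22}), replacing transversal choices by linear-algebraic complement and basis choices. Both implications use crucially that every $f \in L_{\mathbb{S}(W)}(V)$ preserves $W$, so restriction is a homomorphism and $(fgf)_{\upharpoonright_W} = f_{\upharpoonright_W} g_{\upharpoonright_W} f_{\upharpoonright_W}$ for any $g \in L_{\mathbb{S}(W)}(V)$.

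For the forward implication, suppose $fgf = f$ for some $g \in L_{\mathbb{S}(W)}(V)$. Restriction to $W$ yields $f_{\upharpoonright_W} g_{\upharpoonright_W} f_{\upharpoonright_W} = f_{\upharpoonright_W}$ with $g_{\upharpoonright_W} \in \mathbb{S}(W)$, which gives (i). For (ii), the inclusion $R(f_{\upharpoonright_W}) \subseteq R(f) \cap W$ is immediate; conversely, if $v = uf \in R(f) \cap W$, then $vg \in W$ (since $g$ preserves $W$) and $(vg)f = ufgf = uf = v$, so $v \in R(f_{\upharpoonright_W})$.

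For the converse, assume (i) and (ii) and choose $h \in \mathbb{S}(W)$ with $f_{\upharpoonright_W} h f_{\upharpoonright_W} = f_{\upharpoonright_W}$. Pick a subspace $R_0 \subseteq R(f)$ with $R(f) = (R(f) \cap W) \oplus R_0$, and a complement $V_0$ of $W + R(f)$ in $V$. Since $R_0 \cap W \subseteq R(f) \cap W \cap R_0 = 0$, this yields $V = W \oplus R_0 \oplus V_0$. Fix a basis of $R_0$ and, for each basis vector $r$, pick $u_r \in V$ with $u_r f = r$; extend linearly to a map $\sigma \colon R_0 \to V$ satisfying $r \sigma f = r$ for all $r \in R_0$. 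Define $g \colon V \to V$ linearly by $g|_W = h$, $g|_{R_0} = \sigma$, and $g|_{V_0} = 0$; then $g_{\upharpoonright_W} = h \in \mathbb{S}(W)$, so $g \in L_{\mathbb{S}(W)}(V)$.

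To verify $fgf = f$, take $v \in V$ and decompose $vf = w' + r'$ with $w' \in R(f) \cap W$ and $r' \in R_0$. By (ii), $w' = w_0 f_{\upharpoonright_W}$ for some $w_0 \in W$, and the defining equation for $h$ gives $w' h f = w_0 f_{\upharpoonright_W} h f_{\upharpoonright_W} = w_0 f_{\upharpoonright_W} = w'$, while $r' g f = r' \sigma f = r'$. Summing, $vfgf = w' + r' = vf$. The main obstacle is enforcing $g_{\upharpoonright_W} = h$ and $fgf = f$ simultaneously: condition (ii) is precisely what makes this possible, since it forces every element of $R(f) \cap W$ to already lie in $R(f_{\upharpoonright_W})$, so the $h$-based definition of $g$ on $W$ is automatically consistent with the preimage structure on $R(f) \cap W$ required by $fgf = f$.
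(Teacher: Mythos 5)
Your proof is correct and takes essentially the same route as the paper's: the same restriction argument for (i), and for the converse the same construction of an inner inverse, which the paper phrases with explicit bases $B_1,B_2,B_3,B_4$ where your $R_0$ is $\langle B_3\rangle$ and your $V_0$ is $\langle B_4\rangle$. The only (harmless) deviation is that you prove the forward half of (ii) directly via $v=(vg)f$ with $vg\in W$, whereas the paper outsources it to a cited result on $\overline{L}(V,W)$.
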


\begin{proof}[\textbf{Proof}]
Suppose that $f\in \reg(L_{\mathbb{S}(W)}(V))$. Then there exists $g\in L_{\mathbb{S}(W)}(V)$ such that $fgf=f$. It follows that $g_{\upharpoonright_W}\in \mathbb{S}(W)$ and  $f_{\upharpoonright_W} g_{\upharpoonright_W} f_{\upharpoonright_W} = f_{\upharpoonright_W}$, and so $f_{\upharpoonright_W}\in \reg(\mathbb{S}(W))$. Thus (i) holds. To show (ii), note that $L_{\mathbb{S}(W)}(V)\subseteq L_{L(W)}(V)$ and $L_{L(W)}(V) = \overline{L}(V, W)$. Since $f\in \reg(L_{\mathbb{S}(W)}(V))$, we have $f\in \reg(\overline{L}(V, W))$, and hence (ii) holds by \cite[Proposition 3.1]{nenth-ijmms06}.

\vspace{0.05cm}
Conversely, suppose that $f$ satisfies the given conditions. By $\rm(i)$, there exists $\alpha\in \mathbb{S}(W)$ such that $f_{\upharpoonright_W}\alpha f_{\upharpoonright_W} =f_{\upharpoonright_W}$. Now, let $B_1$ be a basis for $R(f)\cap W$. Extend $B_1$ to bases $B_1\cup B_2$ and $B_1\cup B_3$ for $W$ and $R(f)$, respectively, where $B_2\subseteq W\setminus \langle B_1\rangle$ and $B_3\subseteq R(f)\setminus \langle B_1\rangle$. Then $B_1\cup B_2\cup B_3$ is a basis for $W +R(f)$. Extend $B_1\cup B_2\cup B_3$ to a basis $B_1\cup B_2\cup B_3\cup B_4$ for $V$, where $B_4 \subseteq V \setminus (W + R(f))$. Since $B_3\subseteq R(f)$, we have $vf^{-1}\neq \varnothing$ for all $v\in B_3$. Therefore for each $v\in B_3$, choose $v'\in vf^{-1}$. Define $h\in L(V)$ by 

\begin{equation*}
	vh=	
	\begin{cases}
		v\alpha   & \text{ if $v\in B_1\cup B_2$},\\
		v'        & \text{ if $v\in B_3$},\\
		0         & \text{ if $v\in B_4$}.
	\end{cases}
\end{equation*}
It is routine to verify that $h\in L_{\mathbb{S}(W)}(V)$ and $fhf=f$. Hence $f\in \reg(L_{\mathbb{S}(W)}(V))$.
\end{proof}


The following proposition gives a sufficient condition for $L_{\mathbb{S}(W)}(V)$ to be regular.

\begin{proposition}\label{S(W)=group-L(V,W)=regular}
If  $\mathbb{S}(W)$ is a subgroup of $\aut(W)$, then $L_{\mathbb{S}(W)}(V)$ is regular.
\end{proposition}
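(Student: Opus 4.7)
The plan is to reduce the claim to Theorem \ref{regular-element-L-VW} by verifying its two hypotheses for an arbitrary $f \in L_{\mathbb{S}(W)}(V)$. This parallels exactly the proof of Proposition \ref{S(W)=group-T(X,Y)=regular}, where the analogous statement in the transformation setting was obtained by invoking the characterization of regular elements from \cite{koni-sf22}.

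Pick any $f\in L_{\mathbb{S}(W)}(V)$, so that $f_{\upharpoonright_W}\in \mathbb{S}(W)$. First, because $\mathbb{S}(W)$ is a group, every element of $\mathbb{S}(W)$ has a two-sided inverse there, and in particular $f_{\upharpoonright_W}$ is a regular element of $\mathbb{S}(W)$. This verifies condition (i) of Theorem \ref{regular-element-L-VW}. Second, since $\mathbb{S}(W)\subseteq \aut(W)$, the restriction $f_{\upharpoonright_W}$ is an automorphism of $W$; hence $R(f_{\upharpoonright_W})=W$. The inclusion $R(f_{\upharpoonright_W})\subseteq R(f)\cap W$ is automatic from the definition of the corestriction, and the reverse inclusion $R(f)\cap W\subseteq W=R(f_{\upharpoonright_W})$ is trivial. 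Therefore $R(f)\cap W = R(f_{\upharpoonright_W})$, which is condition (ii) of Theorem \ref{regular-element-L-VW}.

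Applying Theorem \ref{regular-element-L-VW} then yields $f\in \reg(L_{\mathbb{S}(W)}(V))$. Since $f$ was arbitrary, $L_{\mathbb{S}(W)}(V)$ is a regular semigroup, as claimed. There is essentially no obstacle here: the proof is a direct corollary of the characterization theorem just established, and the argument mirrors the transformation-semigroup case verbatim, with $\aut(W)$ playing the role of $\sym(Y)$.
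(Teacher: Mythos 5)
Your proof is correct and is essentially identical to the paper's: both verify conditions (i) and (ii) of Theorem \ref{regular-element-L-VW} for an arbitrary $f\in L_{\mathbb{S}(W)}(V)$, using that $f_{\upharpoonright_W}$ is an automorphism of $W$ so that $R(f_{\upharpoonright_W})=W=R(f)\cap W$. Nothing further is needed.
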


\begin{proof}[\textbf{Proof}]
Let $f\in L_{\mathbb{S}(W)}(V)$. Then $f_{\upharpoonright_W}\in \mathbb{S}(W)$. If $\mathbb{S}(W)$ is a subgroup of $\aut(W)$, then we certainly have $f_{\upharpoonright_W} \in \reg(\mathbb{S}(W))$ and $R(f_{\upharpoonright_W})=W=R(f)\cap W$. Therefore  $f\in \reg(L_{\mathbb{S}(W)}(V))$ by Theorem \ref{regular-element-L-VW}. Hence $L_{\mathbb{S}(W)}(V)$ is regular.		
\end{proof}

The following theorem gives a necessary and sufficient condition for $L_{\mathbb{S}(W)}(V)$ to be regular.

\begin{theorem}\label{regular semigroup}
Let $\mathbb{S}(W)$ be a subsemigroup of $L(W)$. Then $L_{\mathbb{S}(W)}(V)$ is regular if and only if one of the following holds:
\begin{enumerate}
	\item[\rm(i)] $\mathbb{S}(W)$ is a subgroup of $\aut(W)$.
	\item[\rm(ii)] $\mathbb{S}(W)$ is regular and $W = V$.
\end{enumerate}
\end{theorem}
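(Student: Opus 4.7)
The plan is to mirror the proof of Theorem~\ref{regular-semigroup-T-XY}, substituting its set-theoretic ingredients with the linear analogues already developed: Proposition~\ref{epimorphism-L-VW}, Proposition~\ref{S(W)=group-L(V,W)=regular}, Theorem~\ref{regular-element-L-VW}, together with Lemma~\ref{r-subsemigroup=subgroup} (which applies to subsemigroups of any group). The converse direction is immediate: if (i) holds, $L_{\mathbb{S}(W)}(V)$ is regular by Proposition~\ref{S(W)=group-L(V,W)=regular}; if (ii) holds, then $W = V$ forces $L_{\mathbb{S}(W)}(V) = \mathbb{S}(W)$, which is regular by hypothesis.

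For the forward direction, suppose $L_{\mathbb{S}(W)}(V)$ is regular and that (i) fails; the aim is to deduce (ii). Proposition~\ref{epimorphism-L-VW} combined with the fact that a homomorphic image of a regular semigroup is regular (cf.~\cite[Lemma 2.4.4]{howie95}) gives that $\mathbb{S}(W)$ is regular, so it only remains to prove $W = V$. Assume toward a contradiction that $W \subsetneq V$. I first show $\mathbb{S}(W) \not\subseteq \Omega(W)$. Indeed, if $\mathbb{S}(W) \subseteq \Omega(W)$, then for any $f \in \mathbb{S}(W)$ I can pick $g \in \mathbb{S}(W)$ with $fgf = f$; surjectivity of $f$ forces $gf = I_W$, whence $gfg = g$, and surjectivity of $g$ then yields $fg = I_W$, so $f \in \aut(W)$. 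Thus $\mathbb{S}(W) \subseteq \aut(W)$, and Lemma~\ref{r-subsemigroup=subgroup} would make $\mathbb{S}(W)$ a subgroup of $\aut(W)$, contradicting the failure of (i). Therefore there exists $\alpha \in \mathbb{S}(W)$ with $R(\alpha) \subsetneq W$; fix $y_0 \in W \setminus R(\alpha)$.

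The final step is to exhibit a non-regular element in $L_{\mathbb{S}(W)}(V)$. Let $B_1$ be a basis of $W$ and extend it to a basis $B_1 \cup B_2$ of $V$, so $B_2 \neq \varnothing$ since $W \neq V$. Define $f \in L(V)$ linearly by $bf = b\alpha$ for $b \in B_1$ and $bf = y_0$ for $b \in B_2$. Then $f_{\upharpoonright_W} = \alpha \in \mathbb{S}(W)$, so $f \in L_{\mathbb{S}(W)}(V)$; however $y_0 \in R(f) \cap W$ whereas $y_0 \notin R(\alpha) = R(f_{\upharpoonright_W})$, violating condition (ii) of Theorem~\ref{regular-element-L-VW}, so $f \notin \reg(L_{\mathbb{S}(W)}(V))$, contradicting the regularity of $L_{\mathbb{S}(W)}(V)$. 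Hence $W = V$, completing the proof. The most delicate ingredient is the passage from $\mathbb{S}(W) \subseteq \Omega(W)$ to $\mathbb{S}(W) \subseteq \aut(W)$; everything else is a direct linear-algebraic transcription of the set-theoretic argument, with basis extension playing the role of transversal-based constructions.
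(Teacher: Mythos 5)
Your proposal is correct and follows essentially the same route as the paper: the converse via Proposition~\ref{S(W)=group-L(V,W)=regular}, regularity of $\mathbb{S}(W)$ via Proposition~\ref{epimorphism-L-VW}, the reduction to finding $\alpha\in\mathbb{S}(W)\setminus\Omega(W)$ using Lemma~\ref{r-subsemigroup=subgroup}, and then a basis-extension construction of an $f\in L_{\mathbb{S}(W)}(V)$ with $R(f)\cap W\neq R(f_{\upharpoonright_W})$ that violates Theorem~\ref{regular-element-L-VW}(ii). The only (harmless) differences are that you spell out why a regular subsemigroup of $\Omega(W)$ lands in $\aut(W)$, which the paper merely asserts, and you extend an arbitrary basis of $W$ rather than one adapted to $R(\alpha)$.
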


\begin{proof}[\textbf{Proof}]
Suppose that $L_{\mathbb{S}(W)}(V)$ is regular. By Proposition \ref{S(W)=group-L(V,W)=regular}, we see that $\rm(i)$ may hold. Let us assume that $\rm(i)$ does not hold.

\vspace{0.05cm}
Since $L_{\mathbb{S}(W)}(V)$ is regular, it follows that $\mathbb{S}(W)$ is regular by Proposition \ref{epimorphism-L-VW} and the fact that any homomorphic image of a regular semigroup is regular (cf. \cite[Lemma 2.4.4]{howie95}).

\vspace{0.05cm}
To show $W = V$, suppose to the contrary that $W\neq V$. Observe that $W\neq\{0\}$, since $\mathbb{S}(W)$ is not a subgroup of $\aut(W)$. Now we claim that $\mathbb{S}(W)\setminus \Omega(W)\neq \varnothing$. Suppose to the contrary that $\mathbb{S}(W)\setminus \Omega(W)=\varnothing$. Then $\mathbb{S}(W)\subseteq \Omega(W)$. Since $\mathbb{S}(W)$ is regular, we simply get $\mathbb{S}(W)\subseteq \aut(W)$. It follows that $\mathbb{S}(W)$ is a subgroup of $\aut(W)$ by Lemma \ref{r-subsemigroup=subgroup}, a contradiction to the assumption that $\rm(i)$ does not hold. Hence $\mathbb{S}(W)\setminus \Omega(W)\neq \varnothing$. Thus we choose $\alpha \in \mathbb{S}(W)$ such that $\alpha \notin \Omega(W)$. Clearly $W\setminus R(\alpha)\neq \varnothing$. Let $B_1$ be a basis for $R(\alpha)$. Extend $B_1$ to a basis $B_1\cup B_2$ for $W$, where $B_2\subseteq W\setminus R(\alpha)$. Next, extend $B_1\cup B_2$ to a basis $B_1\cup B_2\cup B_3$ for $V$, where $B_3\subseteq V\setminus W$. Fix $v_0\in B_2$, and define $f\in L(V)$ by
\begin{equation*}
	vf=	
	\begin{cases}
		v\alpha   & \text{ if $v\in B_1\cup B_2$},\\
		v_0       & \text{ if $v\in B_3$.}
	\end{cases}
\end{equation*}
Clearly $f\in L_{\mathbb{S}(W)}(V)$. However, we see that $R(f_{\upharpoonright_W})=R(\alpha)$ and $R(f)\cap W=\langle R(\alpha)\cup \{v_0\}\rangle$, and so $R(f)\cap W\neq R(f_{\upharpoonright_W})$. Therefore $f\notin \reg(L_{\mathbb{S}(W)}(V))$ by Theorem \ref{regular-element-L-VW}, which  contradicts the regularity of $L_{\mathbb{S}(W)}(V)$. Hence $W=V$.

\vspace{0.05cm}
For the converse, suppose first that $\rm(i)$ holds. Then $L_{\mathbb{S}(W)}(V)$ is regular by Proposition \ref{S(W)=group-L(V,W)=regular}. Next, we suppose that $\rm(ii)$ holds. Since $W=V$, we have $L_{\mathbb{S}(W)}(V)=\mathbb{S}(V)$, and hence $L_{\mathbb{S}(W)}(V)$ is regular by hypothesis.
\end{proof}

The following lemma is used in the next two results.

\begin{lemma}\label{subspace-transversal}
Let $f\in L_{\mathbb{S}(W)}(V)$. Then there exists a subspace $U$ of $V$ such that $U$ and $U \cap W$ are transversals of $\ker(f)$ and $\ker(f_{\upharpoonright_W})$, respectively.
\end{lemma}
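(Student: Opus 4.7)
The plan is to construct the subspace $U$ by a careful basis extension that simultaneously produces a complement of $N(f)$ in $V$ and a complement of $N(f_{\upharpoonright_W})$ in $W$. Observing that, in the linear setting, a subspace $U$ is a transversal of $\ker(f)$ precisely when $V=U\oplus N(f)$, and noting that $N(f_{\upharpoonright_W}) = N(f)\cap W$, the goal reduces to finding a subspace $U\subseteq V$ such that
\[
V = U\oplus N(f), \qquad W = (U\cap W)\oplus N(f_{\upharpoonright_W}).
\]

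First I would choose a basis $B_0$ of $N(f)\cap W$ and extend it in two independent directions: once inside $W$ to a basis $B_0\cup B_1$ of $W$ (with $B_1\subseteq W\setminus\langle B_0\rangle$), and once inside $N(f)$ to a basis $B_0\cup B_2$ of $N(f)$ (with $B_2\subseteq N(f)\setminus\langle B_0\rangle$). The key technical step is to verify that $B_0\cup B_1\cup B_2$ is linearly independent; this uses the identity $W\cap N(f) = \langle B_0\rangle$ so that any dependence can be split into a part in $W$ and a part in $N(f)$, both of which must vanish by the independence of $B_0\cup B_1$ and $B_0\cup B_2$ respectively. Once independence is established, extend $B_0\cup B_1\cup B_2$ to a basis $B_0\cup B_1\cup B_2\cup B_3$ of $V$ and set $U \coloneqq \langle B_1\cup B_3\rangle$.

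Finally I would verify the two decomposition properties. The equality $V = U + N(f)$ is immediate from the basis expansion, and $U\cap N(f)=\{0\}$ follows from the linear independence of the full basis (a vector in the intersection has two expansions whose difference forces all coefficients to vanish). For the second decomposition, $\langle B_1\rangle\subseteq U\cap W$ is clear; the reverse inclusion $U\cap W\subseteq\langle B_1\rangle$ is the main bookkeeping obstacle, and it is handled by writing an element of the intersection in both the $B_0\cup B_1$-basis of $W$ and the $B_1\cup B_3$-basis of $U$, and invoking the independence of $B_0\cup B_1\cup B_2\cup B_3$ to conclude that no $B_3$-component appears. Then $U\cap W=\langle B_1\rangle$ together with $W=\langle B_0\rangle\oplus\langle B_1\rangle = N(f_{\upharpoonright_W})\oplus(U\cap W)$ completes the proof.

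The only subtle point is the simultaneous compatibility of the two complements: I expect the verification that $U\cap W$ equals exactly $\langle B_1\rangle$ (rather than something larger picking up part of $B_3$) to be where one must be most careful, and this is precisely why $B_3$ is chosen only after $B_0\cup B_1\cup B_2$ has been shown independent, so that the global basis of $V$ controls every intersection uniformly.
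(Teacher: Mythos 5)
Your proof is correct, but it takes a genuinely different route from the paper. You work on the kernel side: you translate ``$U$ is a transversal of $\ker(f)$'' into the direct-sum condition $V=U\oplus N(f)$ (and likewise $W=(U\cap W)\oplus N(f_{\upharpoonright_W})$ with $N(f_{\upharpoonright_W})=N(f)\cap W$), and then build compatible complements by starting from a basis of $N(f)\cap W$, extending it separately inside $W$ and inside $N(f)$, checking that the union is independent, and completing to a basis of $V$; the verification that $U\cap W=\langle B_1\rangle$ is exactly the delicate point you flag, and your argument for it (comparing coordinates in the global basis) is sound. The paper instead works on the image side: it chooses, for each vector in a basis of $R(f_{\upharpoonright_W})$, a preimage lying in $W$, extends to a basis of $R(f)$ and chooses arbitrary preimages of the new vectors, and takes $U$ to be the span of the chosen preimages, verifying directly that each fiber $wf^{-1}$ meets $U$ in exactly one point. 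Your version is arguably cleaner conceptually --- it reduces the lemma to a standard statement about simultaneous complements and avoids the paper's case split on whether $R(f)\setminus R(f_{\upharpoonright_W})$ is empty --- while the paper's construction runs exactly parallel to its set-theoretic analogue (Lemma \ref{map-trans}) and produces a transversal whose $f$-image is a prescribed basis of $R(f)$, which is occasionally convenient in later arguments. Both proofs are complete and correct.
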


\begin{proof}[\textbf{Proof}]
Let $B_1$ be a basis for $R(f_{\upharpoonright_W})$. If $B_1\neq\varnothing$, then we see that $uf^{-1}\cap W\neq \varnothing$ for all $u\in B_1$. Therefore for each $u\in B_1$, fix $u'\in uf^{-1}\cap W$ and let $C_1=\{u'\colon u\in B_1\}$. 
If $B_1=\varnothing$, then we take $C_1=\varnothing$. 

\vspace{0.05cm}
In either case, we claim that $U_1=\langle C_1\rangle$ is a transversal of $\ker(f_{\upharpoonright_W})$. If $C_1=\varnothing$, then $U_1=\{0\}$, and so we are done. Assume that $C_1\neq\varnothing$ and let $w\in R(f_{\upharpoonright_W})$. Since $B_1$ is a basis for $R(f_{\upharpoonright_W})$, we have $w=c_1u_1+\cdots +c_mu_m$ for some $u_1,\ldots ,u_m\in B_1$, where $m\geq 0$. Consider $w'= c_1u_1'+\cdots +c_mu_m' \in U_1$, where $u_1',\ldots, u_m'\in C_1$. Observe that $w'f=w$, and so $w'\in wf^{-1}\cap U_1$. Recall that $C_1$ is a basis for $U_1$. Therefore, by construction of $C_1$, we see that $wf^{-1}\cap U_1=\{w'\}$. Hence, since $w\in R(f_{\upharpoonright_W})$ is arbitrary, the subspace $U_1$ of $V$ is a transversal of $\ker{(f_{\upharpoonright_W})}$. Now we consider $R(f)\setminus R(f_{\upharpoonright_W})$. There are two cases.

\vspace{0.05cm}
\noindent\textbf{Case 1:} Suppose $R(f)\setminus R(f_{\upharpoonright_W})=\varnothing$. Take $U=U_1$. Clearly $U\cap W=U_1$. Thus $U$ and $U\cap W$ are transversals of $\ker(f)$ and $\ker(f_{\upharpoonright_W})$, respectively.

\vspace{0.05cm}
\noindent\textbf{Case 2:} Suppose $R(f)\setminus R(f_{\upharpoonright_W})\neq\varnothing$. Extend $B_1$ to a basis $B_1\cup B_2$ for $R(f)$, where $B_2\subseteq R(f)\setminus R(f_{\upharpoonright_W})$. Note that $B_2\neq \varnothing$ and $vf^{-1}\neq \varnothing$ for all $v\in B_2$. Therefore for each $v\in B_2$, fix $\bar{v}\in vf^{-1}$. Write $C=C_1\cup \{\bar{v} \in vf^{-1}\mid v\in B_2\}$ and let $U = \langle C\rangle$. Clearly $U\cap W=U_1$, and so $U\cap W$ is a transversal of  $\ker(f_{\upharpoonright_W})$. Now it remains to show that $U$ is a transversal of $\ker(f)$. For this, let $w\in R(f)$. Since $B_1\cup B_2$ is a basis for $R(f)$, we have $w=c_1u_1+\cdots +c_mu_m+d_1v_1+\cdots +d_nv_n$ for some $u_1,\ldots ,u_m\in B_1$ and $v_1,\ldots ,v_n\in B_2$, where $m,n\geq 0$. Let $w'= c_1u_1'+\cdots +c_mu_m'+d_1\bar{v}_1+\cdots +d_n\bar{v}_n \in U$, where $u_1',\ldots, u_m',\bar{v}_1,\ldots,\bar{v}_n\in C$. Observe that $w'f=w$, and so $w'\in wf^{-1}\cap U$. Recall that $C$ is a basis for $U$. Therefore, by construction of $C$, we see that $wf^{-1}\cap U=\{w'\}$. Hence, since $w\in R(f)$ is arbitrary, the subspace $U$ of $V$ is a transversal of $\ker{(f)}$.
\end{proof}

Observe that $I_V$ is not an element of $L_{\mathbb{S}(W)}(V)$, in general. However, if $I_W \in \mathbb{S}(W)$, then it is clear that $I_V \in L_{\mathbb{S}(W)}(V)$. The following theorem provides a characterization of unit-regular elements in $L_{\mathbb{S}(W)}(V)$ when $I_W \in \mathbb{S}(W)$. 

\begin{theorem}\label{unit-regular elment}
Let $\mathbb{S}(W)$ be a subsemigroup of $L(W)$ such that $I_W\in \mathbb{S}(W)$, and let $f\in L_{\mathbb{S}(W)}(V)$. Then $f\in \ureg(L_{\mathbb{S}(W)}(V))$ if and only if 
\begin{enumerate}
	\item[\rm(i)] $f_{\upharpoonright_W}\in \ureg(\mathbb{S}(W))$;
	\item[\rm(ii)] $R(f)\cap W=R(f_{\upharpoonright_W})$;
	\item[\rm(iii)] $\codim(W+T_f)=\codim(W+R(f))$ for some subspace $T_f$ of $V$ such that $T_f$ and $W\cap T_f$ are transversals of $\ker(f)$ and $\ker(f_{\upharpoonright_W})$, respectively.	
\end{enumerate}
\end{theorem}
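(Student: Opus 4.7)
My plan is to handle the forward direction using Proposition~\ref{epimorphism-L-VW} and Theorem~\ref{regular-element-L-VW}, and the converse by explicit construction of a suitable unit $g\in L_{\mathbb{S}(W)}(V)$ using the subspace $T_f$ supplied by (iii).

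For the forward direction, I will suppose $f\in\ureg(L_{\mathbb{S}(W)}(V))$, witnessed by $g\in U(L_{\mathbb{S}(W)}(V))$ with $fgf=f$. Applying the epimorphism $\varphi$ of Proposition~\ref{epimorphism-L-VW} (noting $I_V\in L_{\mathbb{S}(W)}(V)$ since $I_W\in\mathbb{S}(W)$) sends $g$ to a unit $g_{\upharpoonright_W}\in U(\mathbb{S}(W))$ with $f_{\upharpoonright_W}g_{\upharpoonright_W}f_{\upharpoonright_W}=f_{\upharpoonright_W}$, yielding~(i); since unit-regular implies regular, (ii) follows from Theorem~\ref{regular-element-L-VW}. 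For~(iii), the unit $g$ is in fact an automorphism of $V$ with $Wg=W$ (because its inverse lies in $L_{\mathbb{S}(W)}(V)$ and $g_{\upharpoonright_W}\in\aut(W)$), so I set $T_f:=R(f)g$. The identity $ugf=u$ for $u\in R(f)$, a direct consequence of $fgf=f$, makes $T_f$ a transversal of $\ker(f)$, and the same reasoning inside $W$ shows $T_f\cap W = R(f_{\upharpoonright_W})g_{\upharpoonright_W}$ is a transversal of $\ker(f_{\upharpoonright_W})$. Finally $(W+R(f))g = W+T_f$, so the induced map $V/(W+R(f))\to V/(W+T_f)$ is an isomorphism, giving the codimension equality.

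For the converse, I will assume (i)--(iii), fix $\alpha\in U(\mathbb{S}(W))$ with $f_{\upharpoonright_W}\alpha f_{\upharpoonright_W}=f_{\upharpoonright_W}$, and fix a $T_f$ as in~(iii). Take a basis $B_1$ of $T_f\cap W$, extend to a basis $B_1\cup B_2$ of $T_f$, and set $C_1:=B_1f$, $C_2:=B_2f$, so $C_1$ is a basis of $R(f_{\upharpoonright_W})$ and $C_1\cup C_2$ is a basis of $R(f)$. Using~(ii) one checks $\langle B_2\rangle\cap W = \langle C_2\rangle\cap W = \{0\}$, hence $W+T_f = W\oplus\langle B_2\rangle$ and $W+R(f) = W\oplus\langle C_2\rangle$. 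Pick complements $E$ of $W+R(f)$ and $E'$ of $W+T_f$ in $V$; by~(iii) they have equal dimension, so fix a linear isomorphism $\psi\colon E\to E'$. I then define $g$ on the decomposition $V = W\oplus\langle C_2\rangle\oplus E$ by $g|_W=\alpha$, by sending each $c\in C_2$ to the unique $b\in B_2$ with $bf=c$ (extended linearly to $\langle C_2\rangle\to\langle B_2\rangle$), and by $g|_E=\psi$. This $g$ sends the three summands of its domain bijectively onto the three summands of $V=W\oplus\langle B_2\rangle\oplus E'$, so $g\in\aut(V)$; together with $g_{\upharpoonright_W}=\alpha\in U(\mathbb{S}(W))$ this gives $g\in U(L_{\mathbb{S}(W)}(V))$. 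The identity $fgf=f$ is then checked by writing any $vf\in R(f)$ as $x+y$ with $x\in\langle C_1\rangle\subseteq W$ and $y\in\langle C_2\rangle$: one has $x\alpha f = x$ from $f_{\upharpoonright_W}\alpha f_{\upharpoonright_W}=f_{\upharpoonright_W}$, and $(yg)f = y$ by construction on $C_2$, whence $vfgf = x+y = vf$.

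The main obstacle is precisely condition~(iii). Once we demand $g|_W=\alpha$ and $fgf=f$, the behaviour of $g$ on $W\oplus\langle C_2\rangle$ is essentially forced and its $g$-image is $W+T_f$; extending $g$ to an automorphism of $V$ therefore requires matching a complement of $W+R(f)$ with a complement of $W+T_f$, which is possible exactly when $\codim(W+T_f)=\codim(W+R(f))$. This is what~(iii) supplies, and it is also exactly what the forward direction produces, so the characterisation closes cleanly.
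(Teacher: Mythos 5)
Your proof is correct, and its overall architecture matches the paper's: (i) via the epimorphism of Proposition~\ref{epimorphism-L-VW}, and a converse that builds an explicit unit acting as $\alpha$ on $W$, as an inverse of $f$ on a transversal of the ``new'' part of $R(f)$, and as an arbitrary isomorphism between complements of $W+R(f)$ and $W+T_f$ (your decomposition $V=W\oplus\langle C_2\rangle\oplus E$ versus the paper's basis $B_1\cup B_2\cup B_3\cup B_4$ is only a cosmetic difference). Where you genuinely diverge is the forward direction for (ii) and (iii): the paper disposes of both by noting $f\in\ureg(\overline{L}(V,W))$ and citing Theorem~5.6 of the authors' earlier preprint, whereas you give a self-contained argument --- (ii) from Theorem~\ref{regular-element-L-VW} since unit-regular elements are regular, and (iii) by exhibiting the concrete transversal $T_f:=R(f)g$, for which $fgf=f$ forces $f$ to restrict to an isomorphism $T_f\to R(f)$, $Wg=W$ gives $T_f\cap W=R(f_{\upharpoonright_W})g_{\upharpoonright_W}$, and $(W+R(f))g=W+T_f$ yields the codimension equality. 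This buys independence from the external reference at the cost of a few extra lines; it also makes transparent that condition (iii) is exactly the image of $W+R(f)$ under the witnessing unit, which the citation hides. One small point worth making explicit in your write-up: $g\in U(L_{\mathbb{S}(W)}(V))$ requires not just that $g$ be an automorphism of $V$ with $g_{\upharpoonright_W}=\alpha$, but that $g^{-1}$ also lie in $L_{\mathbb{S}(W)}(V)$; this holds because $(g^{-1})_{\upharpoonright_W}=\alpha^{-1}\in\mathbb{S}(W)$ as $\alpha\in U(\mathbb{S}(W))$.
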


\begin{proof}[\textbf{Proof}]
Suppose that $f\in \ureg(L_{\mathbb{S}(W)}(V))$. Then there exists $g\in U(L_{\mathbb{S}(W)}(V))$ such that $fgf = f$. This simply gives $g_{\upharpoonright_W}\in U(\mathbb{S}(W))$ and $f_{\upharpoonright_W} g_{\upharpoonright_W} f_{\upharpoonright_W} = f_{\upharpoonright_W}$, and so $f_{\upharpoonright_W}\in \ureg(\mathbb{S}(W))$. Thus $\rm(i)$ holds.

\vspace{0.05cm}
To show (ii) and (iii), note that $L_{L(W)}(V) = \overline{L}(V, W)$. Since $f\in \ureg(L_{\mathbb{S}(W)}(V))$ and $L_{\mathbb{S}(W)}(V) \subseteq L_{L(W)}(V)$, we have $f\in \ureg(\overline{L}(V, W))$. Hence $\rm(ii)$ and $\rm(iii)$ are directly followed by \cite[Theorem 5.6]{shubh-lma22}.

\vspace{0.1cm}
Conversely, suppose that $f$ satisfies \rm(i)--\rm(iii). By $\rm(i)$, there exists $g_0\in U(\mathbb{S}(W))$ such that $f_{\upharpoonright_W} g_0f_{\upharpoonright_W}=f_{\upharpoonright_W}$. Now, let $B_1$ be a basis for $R(f)\cap W$. Extend $B_1$ to bases $B_1\cup B_2$ and $B_1\cup B_3$ for $W$ and $R(f)$, respectively, where $B_2\subseteq W\setminus \langle B_1\rangle$ and $B_3\subseteq R(f)\setminus \langle B_1\rangle$. Then $B_1\cup B_2\cup B_3$ is a basis for $W+R(f)$. Extend $B_1\cup B_2\cup B_3$ to a basis $B:= B_1\cup B_2\cup B_3\cup B_4$ for $V$, where $B_4 \subseteq V \setminus \big(W+R(f)\big)$.

\vspace{0.05cm}
Write $C_1= B_1g_0$ and $C_2= B_2g_0$. Since $B_1 \cup B_2$ is a basis for $W$ and $g_0$ is an automorphism of $W$, it follows that $C_1\cup C_2$ is also a basis for $W$ (cf. \cite[Theorem 2.4(3)]{s-roman07}).

\vspace{0.05cm}
By $\rm(iii)$, since the subspace $T_f$ of $V$ is a transversal of $\ker(f)$, we see that the corestriction of $f_{|_{T_f}}$ to $R(f)$ is an isomorphism. Denote the inverse of this corestriction map by $g_1$. It is clear that $g_1\colon R(f)\to T_f$ is an isomorphism. Write $C'_1= B_1g_1$ and $C_3= B_3g_1$. Since $B_1 \cup B_3$ is a basis for $R(f)$ and $g_1\colon R(f)\to T_f$ is an isomorphism, it follows that $C'_1\cup C_3$ is a basis for $T_f$ (cf. \cite[Theorem 2.4(3)]{s-roman07}).

\vspace{0.05cm}
Write $W\cap T_f=T_{(f_{\upharpoonright_W})}$. By $\rm(ii)$, it is clear that $C'_1$ is a basis for the subspace $T_{(f_{\upharpoonright_W})}$ of $W$, and so $C'_1\subseteq \langle C_1\cup C_2\rangle$. Therefore $C_1\cup C_2\cup C_3$ is a basis for $W+T_f$. Extend $C_1\cup C_2\cup C_3$ to a basis $C:=C_1\cup C_2\cup C_3\cup C_4$ for $V$, where $C_4\subseteq V\setminus (W+T_f)$. Since $\codim(W+R(f))=\codim(W+T_f)$, we get $|B_4|=|C_4|$. Therefore there exists a bijection $g_2 \colon B_4\to C_4$. Define $g\in L(V)$ by
\begin{equation*}
	vg=
	\begin{cases}
		vg_0     & \text{if $v\in B_1\cup B_2$},\\
		vg_1     & \text{if $v\in B_3$},\\
		vg_2  & \text{if $v\in B_4$}.
	\end{cases}
\end{equation*}

It is routine to verify that $g$ is bijective, $g\in L_{\mathbb{S}(W)}(V)$, and $fgf = f$. Hence $f\in \ureg(L_{\mathbb{S}(W)}(V))$.
\end{proof}

If $I_W\in \mathbb{S}(W)$, then the following proposition gives a sufficient condition for $L_{\mathbb{S}(W)}(V)$ to be unit-regular.

\begin{proposition}\label{L(V,W)=unit-regular-2}
Let $\mathbb{S}(W)$ be a subsemigroup of $L(W)$ such that $I_W\in \mathbb{S}(W)$. If $\mathbb{S}(W)$ is a subgroup of $\aut(W)$ and $\codim(W)$ is finite, then $L_{\mathbb{S}(W)}(V)$ is unit-regular.
\end{proposition}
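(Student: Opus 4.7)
The plan is to verify, for each $f \in L_{\mathbb{S}(W)}(V)$, the three conditions of Theorem \ref{unit-regular elment}, which will immediately yield $f \in \ureg(L_{\mathbb{S}(W)}(V))$. Fix $f \in L_{\mathbb{S}(W)}(V)$, so $f_{\upharpoonright_W} \in \mathbb{S}(W)$. Since $\mathbb{S}(W)$ is a subgroup of $\aut(W)$ by hypothesis, $f_{\upharpoonright_W}$ is a unit in $\mathbb{S}(W)$ and in particular lies in $\ureg(\mathbb{S}(W))$, giving (i). Moreover, since $f_{\upharpoonright_W}$ is an automorphism of $W$, we have $R(f_{\upharpoonright_W}) = W \subseteq R(f)$, and so $R(f) \cap W = W = R(f_{\upharpoonright_W})$, which is (ii).

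The main work will be condition (iii). I would first invoke Lemma \ref{subspace-transversal} to produce a subspace $T_f$ of $V$ such that $T_f$ is a transversal of $\ker(f)$ and $W \cap T_f$ is a transversal of $\ker(f_{\upharpoonright_W})$. Because $f_{\upharpoonright_W}$ is injective, $\ker(f_{\upharpoonright_W})$ is the diagonal of $W$, whose only transversal is $W$ itself; hence $W \cap T_f = W$, i.e., $W \subseteq T_f$. Combined with $W \subseteq R(f)$ from (ii), this gives $W + T_f = T_f$ and $W + R(f) = R(f)$, so (iii) reduces to the single equality $\codim(T_f) = \codim(R(f))$.

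To establish that equality, I would use that the restriction $f|_{T_f} \colon T_f \to R(f)$ is a linear bijection (this is exactly the statement that $T_f$ is a transversal of $\ker(f)$), and that it carries $W \subseteq T_f$ onto $Wf = W \subseteq R(f)$ because $f_{\upharpoonright_W}$ is an automorphism of $W$. Thus it induces an isomorphism of quotient spaces $T_f/W \to R(f)/W$, so $\dim(T_f/W) = \dim(R(f)/W)$. Applying the standard dimension identity for the chains $W \subseteq T_f \subseteq V$ and $W \subseteq R(f) \subseteq V$ (valid because $\dim(V/W) = \codim(W)$ is finite by hypothesis), I obtain
\[
\codim(T_f) = \dim(V/W) - \dim(T_f/W) = \dim(V/W) - \dim(R(f)/W) = \codim(R(f)),
\]
which settles (iii). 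Theorem \ref{unit-regular elment} then delivers $f \in \ureg(L_{\mathbb{S}(W)}(V))$, and since $f$ was arbitrary, $L_{\mathbb{S}(W)}(V)$ is unit-regular.

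The only mild subtlety is the dimension arithmetic of the last display; finiteness of $\codim(W)$ is precisely what makes it legitimate to subtract dimensions and to upgrade the isomorphism of relative quotients to the equality of absolute codimensions required by Theorem \ref{unit-regular elment}(iii).
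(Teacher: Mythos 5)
Your proposal is correct and follows essentially the same route as the paper: verify conditions (i)--(iii) of Theorem \ref{unit-regular elment}, using Lemma \ref{subspace-transversal} to produce the transversal subspace $T_f$, deduce $W\subseteq T_f$ and $W\subseteq R(f)$ from $f_{\upharpoonright_W}\in\aut(W)$, and reduce (iii) to $\dim(T_f/W)=\dim(R(f)/W)$ via the finite-codimension arithmetic. The only cosmetic difference is that you justify $\dim(T_f/W)=\dim(R(f)/W)$ directly by the induced isomorphism of quotients, where the paper cites an external lemma for the same fact.
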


\begin{proof}[\textbf{Proof}]
Let $f\in L_{\mathbb{S}(W)}(V)$. Then $f_{\upharpoonright_W} \in \mathbb{S}(W)$. Since $\mathbb{S}(W)$ is a subgroup of $\aut(W)$, we have $f_{\upharpoonright_W}\in \aut(W)$. Therefore both $\rm(i)$ and $\rm(ii)$ of Theorem \ref{unit-regular elment} trivially hold. 

\vspace{0.05cm}
Now, by Lemma \ref{subspace-transversal}, let $T_f$ be a subspace of $V$ such that $T_f$ and $T_f\cap W$ are transversals of $\ker(f)$ and $\ker(f_{\upharpoonright_W})$, respectively. Observe that $f\colon T_f\to R(f)$ is an isomorphism and $Wf=W$. By \cite[Lemma 5.2]{shubh-lma22}, we therefore have $\dim(T_f/W)=\dim(R(f)/W)$. Since 
$f_{\upharpoonright_W}\in \aut(W)$, we simply have $W\subseteq T_f$ and $W\subseteq R(f)$. Then $W+T_f=T_f$, $W+R(f)=R(f)$, and both $T_f/W$ and $R(f)/W$ are subspaces of $V/W$. Therefore we obtain
\begin{align*}
	\codim(W+T_f)&=\codim(T_f)\\
	&=\dim(V/T_f)\\
	&= \dim((V/W)/(T_f/W))\\  
	&=\dim(V/W)-\dim(T_f/W) \qquad \qquad (\text{since } \codim(W)<\infty)\\
	&=\dim(V/W)-\dim(R(f)/W)\\
	&= \dim((V/W)/(R(f)/W)) \;\;\qquad \qquad(\text{since } \codim(W)<\infty)\\
	&= \dim(V/R(f))\\
	&=\codim(R(f))\\
	&= \codim(W+R(f)),
\end{align*}
and so $f$ satisfies Theorem \ref{unit-regular elment}\rm(iii).
Thus $f\in \ureg(L_{\mathbb{S}(W)}(V))$ by Theorem \ref{unit-regular elment}. Hence, since $f$ is arbitrary, the semigroup $L_{\mathbb{S}(W)}(V)$ is unit-regular.
\end{proof}


If $I_W\in \mathbb{S}(W)$, then the following theorem determines when $L_{\mathbb{S}(W)}(V)$ is unit-regular.
%

\begin{theorem}
Let $\mathbb{S}(W)$ be a subsemigroup of $L(W)$ such that $I_W\in \mathbb{S}(W)$. Then $L_{\mathbb{S}(W)}(V)$ is unit-regular if and only if one of the following holds:
\begin{enumerate}
\item[\rm(i)] $\mathbb{S}(W)$ is a subgroup of $\aut(W)$ and $\codim(W)$ is finite.

\item[\rm(ii)] $\mathbb{S}(W)$ is unit-regular and $W=V$.
\end{enumerate}
\end{theorem}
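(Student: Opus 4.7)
The plan is to parallel the proof of the earlier theorem characterizing when $T_{\mathbb{S}(Y)}(X)$ is unit-regular, substituting the linear-setting tools: Proposition \ref{epimorphism-L-VW}, Theorem \ref{regular semigroup}, Proposition \ref{L(V,W)=unit-regular-2}, and Theorem \ref{unit-regular elment}. The converse direction will be immediate: if (i) holds, then $L_{\mathbb{S}(W)}(V)$ is unit-regular by Proposition \ref{L(V,W)=unit-regular-2}; and if (ii) holds, then $L_{\mathbb{S}(W)}(V) = \mathbb{S}(V)$ is unit-regular by hypothesis.

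For the forward direction, I will assume $L_{\mathbb{S}(W)}(V)$ is unit-regular and that (i) fails, and deduce (ii). First, using Proposition \ref{epimorphism-L-VW} together with the fact that any homomorphic image of a unit-regular semigroup is unit-regular (\cite[Proposition 2.7]{hick97}), $\mathbb{S}(W)$ is unit-regular. To establish $W = V$ I argue by contradiction: assume $W \neq V$. The key sub-claim is that $\mathbb{S}(W)$ cannot be a subgroup of $\aut(W)$. If it were, then since (i) fails, $\codim(W)$ would have to be infinite; I would then choose a complement $U$ of $W$ in $V$ (so $V = W \oplus U$ with $\dim(U)$ infinite) and a linear injection $\alpha \colon U \to U$ that is not surjective. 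Defining $f \in L(V)$ by $wf = w$ for $w \in W$ and $uf = u\alpha$ for $u \in U$ (extended by linearity) gives $f \in L_{\mathbb{S}(W)}(V)$, since $f_{\upharpoonright_W} = I_W \in \mathbb{S}(W)$. Because $f$ is injective, $N(f) = \{0\}$, so every $\ker(f)$-class is a singleton and the only subspace of $V$ that is a transversal of $\ker(f)$ is $V$ itself; hence condition (iii) of Theorem \ref{unit-regular elment} forces $T_f = V$, giving $\codim(W+T_f) = 0$, whereas $W + R(f) = W \oplus U\alpha \neq V$ yields $\codim(W+R(f)) > 0$. This contradicts Theorem \ref{unit-regular elment}(iii), hence the unit-regularity of $L_{\mathbb{S}(W)}(V)$.

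Having ruled out $\mathbb{S}(W)$ being a subgroup of $\aut(W)$, and noting that unit-regularity implies regularity, Theorem \ref{regular semigroup} together with $W \neq V$ immediately gives a contradiction. Hence $W = V$, which is (ii). The most delicate point will be the transversal analysis for the injective $f$ constructed above: one must be certain that no proper subspace of $V$ can serve as a transversal of $\ker(f)$, so that the failure of Theorem \ref{unit-regular elment}(iii) is unavoidable. Since $N(f) = \{0\}$ forces every $\ker(f)$-class to be a singleton, the transversal must in fact equal $V$, and the contradiction is genuine.
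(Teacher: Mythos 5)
Your proposal is correct and follows essentially the same route as the paper: the same reduction via Proposition \ref{epimorphism-L-VW}, the same contradiction scheme (if (i) fails and $W\neq V$, build an injective non-surjective $f$ fixing $W$ pointwise to rule out $\mathbb{S}(W)\leq\aut(W)$ with $\codim(W)$ infinite, then invoke Theorem \ref{regular semigroup}). The only cosmetic difference is that the paper concludes $f\notin\ureg(L_{\mathbb{S}(W)}(V))$ by citing the external criterion $\nul(f)=\corank(f)$ for $\ureg(L(V))$, whereas you correctly derive the same conclusion internally from Theorem \ref{unit-regular elment}(iii) via the observation that $V$ is the only transversal of $\ker(f)$ for injective $f$.
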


\begin{proof}[\textbf{Proof}]
Suppose that $L_{\mathbb{S}(W)}(V)$ is unit-regular. By Proposition \ref{L(V,W)=unit-regular-2}, we see that $\rm(i)$ may hold. Let us assume that $\rm(i)$ does not hold.

\vspace{0.05cm}
Since $L_{\mathbb{S}(W)}(V)$ is unit-regular, it follows that $\mathbb{S}(W)$ is unit-regular by Proposition \ref{epimorphism-L-VW} and the fact that any homomorphic image of a unit-regular semigroup is unit-regular (cf. \cite[Proposition 2.7]{hick97}).

\vspace{0.05cm}

To show $W=V$, suppose to the contrary that $W\neq V$. Then we claim that $\mathbb{S}(W)$ is not a subgroup of $\aut(W)$. Suppose to the contrary that $\mathbb{S}(W)$ is a subgroup of $\aut(W)$. Then we further claim that $\codim(W)$ is finite. Suppose to the contrary that $\codim(W)$ is infinite. Then there exists a linear map $\psi\colon V/W\to V/W$ which is injective but not surjective. Take a basis $B_1$ for $W$. Extend $B_1$ to a basis $B_1\cup B_2$ for $V$, where $B_2\subseteq V\setminus \langle B_1\rangle$. Define $f\in L(V)$ by
\begin{align*}
vf=
\begin{cases}
	v    & \text{if $v\in B_1$,}\\
	v'   & \text{if $v\in B_2$, where $(v+W)\psi=v'+W$.}	
\end{cases}
\end{align*}
It is easy to see that $f$ is injective but not surjective. Therefore $\nul(f) \neq \corank(f)$, and so $f\notin \ureg(L(V))$ by \cite[Corollary 5.7]{shubh-lma22}.
Clearly $f\in L_{\mathbb{S}(W)}(V)$ and $L_{\mathbb{S}(W)}(V)\subseteq L(V)$, it follows that $f\notin \ureg(L_{\mathbb{S}(W)}(V))$, which  contradicts the unit-regularity of $L_{\mathbb{S}(W)}(V)$. Hence  $\codim(W)$ is finite, and so $\rm(i)$ holds. This is a contradiction to the assumption that $\rm(i)$ does not hold. Thus $\mathbb{S}(W)$ is not a subgroup of $\aut(W)$. Hence, since $L_{\mathbb{S}(W)}(V)$ is regular, we have $W = V$ by Theorem \ref{regular semigroup}.

\vspace{0.1cm}
For the converse, suppose first that $\rm(i)$ holds. Then $L_{\mathbb{S}(W)}(V)$ is unit-regular by Proposition \ref{L(V,W)=unit-regular-2}. Next, suppose that $\rm(ii)$ holds. If $W=V$, then $L_{\mathbb{S}(W)}(V)=\mathbb{S}(V)$, and so $L_{\mathbb{S}(W)}(V)$ is unit-regular by hypothesis.
\end{proof}

The following theorem gives a necessary and sufficient condition for $L_{\mathbb{S}(W)}(V)$ to be an inverse semigroup.

\begin{theorem}\label{inverse-semigroup}
Let $\mathbb{S}(W)$ be a subsemigroup of $L(W)$. Then $L_{\mathbb{S}(W)}(V)$ is an inverse semigroup if and only if
\begin{enumerate}
\item[\rm(i)] $\mathbb{S}(W)$ is an inverse semigroup;
\item[\rm(ii)] either $W=V$ or $\dim(V)=1$.
\end{enumerate}	
\end{theorem}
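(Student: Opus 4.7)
The plan is to establish both directions by leveraging Theorem \ref{regular semigroup} (the regularity characterization) and Proposition \ref{epimorphism-L-VW} (the epimorphism onto $\mathbb{S}(W)$). For the forward direction, assume $L_{\mathbb{S}(W)}(V)$ is an inverse semigroup. Condition (i) is immediate: $\mathbb{S}(W)$ is a homomorphic image of $L_{\mathbb{S}(W)}(V)$ by Proposition \ref{epimorphism-L-VW}, and any homomorphic image of an inverse semigroup is inverse (cf.\ \cite[Theorem 5.1.4]{howie95}). For condition (ii), suppose for contradiction that $W \neq V$ and $\dim(V) \geq 2$. Since an inverse semigroup is regular, Theorem \ref{regular semigroup} together with $W \neq V$ forces $\mathbb{S}(W)$ to be a subgroup of $\aut(W)$, so in particular $I_W \in \mathbb{S}(W)$, and every idempotent of $L_{\mathbb{S}(W)}(V)$ must restrict to $I_W$ on $W$ (being the identity of the group $\mathbb{S}(W)$).

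The core step is to exhibit two idempotents in $L_{\mathbb{S}(W)}(V)$ that fail to commute, contradicting the inverse property. I split into two subcases. If $W = \{0\}$, then $L_{\mathbb{S}(W)}(V) = L(V)$; since $\dim(V) \geq 2$, one may pick two linearly independent vectors in $V$ and define two non-commuting projections in $L(V)$, whose restrictions to $\{0\}$ trivially lie in $\mathbb{S}(W)$. If $W \neq \{0\}$, fix a nonzero $w_0 \in W$, extend to a basis $B_W$ of $W$, pick $v_0 \in V \setminus W$ (which exists since $W \neq V$), and extend to a basis $B_W \cup B_U$ of $V$ with $v_0 \in B_U$. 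Define $e_0 \in L(V)$ to be the projection onto $W$ along $\langle B_U \rangle$, and $e_1 \in L(V)$ by $e_1(w) = w$ for $w \in B_W$, $e_1(v_0) = w_0$, and $e_1(u) = 0$ for $u \in B_U \setminus \{v_0\}$. A direct check shows both are idempotent with $(e_i)_{\upharpoonright_W} = I_W \in \mathbb{S}(W)$, and $(e_0 e_1)(v_0) = e_0(w_0) = w_0$ whereas $(e_1 e_0)(v_0) = e_1(0) = 0$, so they do not commute.

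For the converse, assume (i) and (ii). If $W = V$, then $L_{\mathbb{S}(W)}(V) = \mathbb{S}(W)$ is an inverse semigroup by (i). Otherwise $\dim(V) = 1$ and $W \neq V$ forces $W = \{0\}$, so $L_{\mathbb{S}(W)}(V) = L(V)$, which is isomorphic to the multiplicative monoid of the underlying field; this monoid is commutative and regular (every nonzero scalar is invertible), hence inverse.

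The main obstacle is the idempotent construction in the forward direction: one must simultaneously guarantee that both $e_0$ and $e_1$ restrict to $I_W$ on $W$ (so their restrictions lie in the subgroup $\mathbb{S}(W) \subseteq \aut(W)$, whose only idempotent is $I_W$) while producing a concrete test vector $v_0$ on which $e_0 e_1$ and $e_1 e_0$ disagree. The case $W = \{0\}$ needs a separate treatment since there is no nonzero $w_0$ available to define $e_1$ in the uniform way used for $W \neq \{0\}$.
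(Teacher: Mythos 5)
Your proof is correct, but it follows a different route from the paper's in the forward direction. The paper never invokes its regularity characterization (Theorem \ref{regular semigroup}) here; instead it fixes an arbitrary idempotent $\alpha\in E(\mathbb{S}(W))$, first proves $\codim(W)=1$ by sending a basis of a complement of $W$ to two different vectors outside $W$, and then proves $W=\{0\}$ through a further case split on whether $\alpha=I_W$, each time exhibiting a pair of non-commuting idempotents built from $\alpha$. You instead use the fact that an inverse semigroup is regular to conclude from Theorem \ref{regular semigroup} that $\mathbb{S}(W)$ must be a subgroup of $\aut(W)$ whenever $W\neq V$, so that $I_W\in\mathbb{S}(W)$ and a single clean construction (your $e_0$, the projection onto $W$, versus $e_1$ sending $v_0$ to a nonzero $w_0\in W$) yields the contradiction in one step, with the degenerate case $W=\{0\}$ handled by the standard non-commuting idempotents of $L(V)$ for $\dim(V)\ge 2$. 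Your reduction buys a shorter argument with far less case analysis; the paper's argument is self-contained at this point (it does not lean on the regularity theorem) and extracts the intermediate structural facts $\codim(W)=1$ and $W=\{0\}$ explicitly. Both converses are essentially identical (the paper notes $L(V)$ is regular with exactly two commuting idempotents when $\dim(V)=1$, you identify it with the multiplicative monoid of the field); your version is fine. Two cosmetic points: you silently dispose of the case $\dim(V)=0$ (vacuous, since then $W=V$), and your prefix notation $(e_0e_1)(v_0)=e_0(e_1(v_0))$ is opposite to the paper's left-to-right composition convention, though the non-commutativity conclusion is unaffected.
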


\begin{proof}[\textbf{Proof}]
Suppose that $L_{\mathbb{S}(W)}(V)$ is an inverse semigroup. Then $\rm(i)$ is true by Proposition \ref{epimorphism-L-VW} and the fact that any homomorphic image of an inverse semigroup is an inverse semigroup (cf. \cite[Theorem 5.1.4]{howie95}). 

\vspace{0.05cm}
The case $W = V$ may be possible, since $L_{\mathbb{S}(W)}(V) = \mathbb{S}(W)$ if $W = V$. Assume that $W\neq V$. Let $\alpha \in E(\mathbb{S}(W))$. Let $B_1$ and $B_2$ be bases for $N(\alpha)$ and $R(\alpha)$, respectively. Then $W = N(\alpha) \oplus R(\alpha)$, and so $B_1\cup B_2$ is a basis for $W$. Extend $B_1\cup B_2$ to a basis $B:=B_1\cup B_2\cup B_3$ for $V$, where $B_3\subseteq V\setminus W$. Claim that $|B_3|=1$. Suppose to the contrary that there are distinct $v_1, v_2\in B_3$. Define $f,g\in L(V)$ by
\begin{eqnarray*}
vf=	
\begin{cases}
	v\alpha   & \text{if $v\in B_1\cup B_2$},\\
	v_1       & \text{if $v\in B_3$}
\end{cases}
\qquad \text{ and } \qquad
vg=	
\begin{cases}
	v\alpha   & \text{if $v\in B_1\cup B_2$},\\
	v_2       & \text{if $v\in B_3$.}
\end{cases}	
\end{eqnarray*}
Clearly $f,g \in L_{\mathbb{S}(W)}(V)$. Since $\alpha$ is idempotent, it is routine to verify that $f$ and $g$ are idempotents. However, we have $v_1(fg)\neq v_1(gf)$, and so $fg\neq gf$. This leads to a contradiction, because $L_{\mathbb{S}(W)}(V)$ is an inverse semigroup. Hence $|B_3|=1$, and thus $\codim(W) = 1$. Write $B_3=\{u\}$.

\vspace{0.05cm}
Finally, we show that $W=\{0\}$. Suppose to the contrary that $W\neq \{0\}$. There are two cases to consider.

\vspace{0.05cm}
\noindent\textbf{Case 1:} $\alpha =I_W$.
Then $B_2\neq \varnothing$. Fix $w\in B_2$, and define $f,g\in L(V)$ by
\begin{eqnarray*}
vf=	
\begin{cases}
	v\alpha   & \text{if $v\in B_1\cup B_2$},\\
	0         & \text{if $v\in B_3$}
\end{cases}
\qquad \text{ and } \qquad	
vg=	
\begin{cases}
	v\alpha   & \text{if $v\in B_1\cup B_2$},\\
	w         & \text{if $v\in B_3$.}
\end{cases}	
\end{eqnarray*}
Clearly $f,g \in L_{\mathbb{S}(W)}(V)$. Since $\alpha$ is idempotent, it is routine to verify that $f$ and $g$ are idempotents. However, we have $u(fg)\neq u(gf)$, and so $fg\neq gf$. This leads to a contradiction, because $L_{\mathbb{S}(W)}(V)$ is an inverse semigroup.

\vspace{0.05cm}
\noindent\textbf{Case 2:} $\alpha \neq I_W$.
Then $B_1\neq \varnothing$. Fix $w'\in B_1$, and write $w'+u = u'$. Obviously $u'\neq u$ and $u'\in V\setminus W$. Define $f,g\in L(V)$ by
\begin{eqnarray*}
vf=	
\begin{cases}
	v\alpha   & \text{if $v\in B_1\cup B_2$},\\
	u         & \text{if $v\in B_3$}
\end{cases}
\hspace{5mm} \text{and} \hspace{5mm}	
vg=	
\begin{cases}
	v\alpha   & \text{if $v\in B_1\cup B_2$},\\
	u'        & \text{if $v\in B_3$}.
\end{cases}	
\end{eqnarray*}

Clearly $f,g \in L_{\mathbb{S}(W)}(V)$. Since $\alpha$ is idempotent, it is routine to verify that $f$ and $g$ are idempotents. However, we have $u(fg)=(uf)g=ug=u'$ and $u(gf)=(ug)f=u'f= (w'+u)f = w'f + uf = u$, and so $fg\neq gf$. This leads to a contradiction, since $L_{\mathbb{S}(W)}(V)$ is an inverse semigroup.

\vspace{0.05cm}
Thus, in either case, we get a contradiction. Hence $W=\{0\}$. Thus, since $\codim(W) = 1$, we conclude that $\dim(V)=1$.

\vspace{0.1cm}
Conversely, suppose that the given conditions hold. If $W=V$, then $L_{\mathbb{S}(W)}(V)=\mathbb{S}(W)$, and so $L_{\mathbb{S}(W)}(V)$ is an inverse semigroup by $\rm(i)$. Assume that $W\neq V$. Then $\dim(V)=1$, and so we get $W=\{0\}$. It follows that $L_{\mathbb{S}(W)}(V)=L(V)$, and so $L_{\mathbb{S}(W)}(V)$ is regular (cf. \cite[p.63, Exercise 19]{howie95}). Since $\dim(V)=1$, we also see that $L_{\mathbb{S}(W)}(V)$ has only two idempotents, namely the identity and the zero linear transformations on $V$. Since these idempotents commute, we conclude that $L_{\mathbb{S}(W)}(V)$ is an inverse semigroup.
\end{proof}

The next proposition is a consequence of Theorem \ref{inverse-semigroup}, which determines when $L(V)$ is an inverse semigroup.

\begin{proposition}
$L(V)$ is an inverse semigroup if and only if $\dim(V)\leq 1$.
\end{proposition}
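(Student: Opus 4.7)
The plan is to derive this proposition as an immediate specialization of Theorem \ref{inverse-semigroup} by choosing the subspace $W$ to be the zero subspace. Recall from the introductory discussion in Section 4 that $L_{\mathbb{S}(W)}(V) = L(V)$ precisely when $W = \{0\}$, since in that case the restriction condition $f_{\upharpoonright_W} \in \mathbb{S}(W)$ is vacuous (there is only one linear map on $\{0\}$). So taking $W = \{0\}$ and $\mathbb{S}(W) = L(\{0\})$, the semigroup $L_{\mathbb{S}(W)}(V)$ coincides with $L(V)$, and the proposition will drop out of the characterization already proved.

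For the forward direction, I would assume $L(V)$ is an inverse semigroup and invoke Theorem \ref{inverse-semigroup} with $W = \{0\}$. Condition (i) of that theorem, namely that $\mathbb{S}(\{0\}) = L(\{0\})$ be an inverse semigroup, is automatic since $L(\{0\})$ is the one-element semigroup. Condition (ii) then reads: either $\{0\} = V$, i.e.\ $\dim(V) = 0$, or $\dim(V) = 1$. Combining these two possibilities gives $\dim(V) \le 1$.

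For the converse, I would argue directly and symmetrically: if $\dim(V) \le 1$, then conditions (i) and (ii) of Theorem \ref{inverse-semigroup} (again with $W = \{0\}$) are both satisfied -- (i) as above, and (ii) because either $V = \{0\}$ (forcing $W = V$) or $\dim(V) = 1$. Therefore $L_{\mathbb{S}(W)}(V) = L(V)$ is an inverse semigroup.

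There is no real obstacle here; the proposition is essentially a corollary that extracts the ``absolute'' version of the relative result. The only point requiring a momentary check is that $L(\{0\})$ qualifies as an inverse semigroup so that hypothesis (i) of Theorem \ref{inverse-semigroup} is available -- this is trivial since $L(\{0\})$ has a single element which is clearly its own inverse and idempotent. Everything else is a direct translation of Theorem \ref{inverse-semigroup}(ii) into the inequality $\dim(V) \le 1$.
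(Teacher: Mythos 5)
Your proposal is correct and follows essentially the same route as the paper: both specialize Theorem \ref{inverse-semigroup} to $W=\{0\}$ (so that $L_{\mathbb{S}(W)}(V)=L(V)$), with the only cosmetic difference being that the paper treats the case $V=\{0\}$ separately as trivial rather than folding it into the alternative ``$W=V$'' of condition (ii).
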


\begin{proof}[\textbf{Proof}]
Suppose that $L(V)$ is an inverse semigroup. The result is obvious if $V=\{0\}$. Let us assume that $V\neq \{0\}$. Take $W=\{0\}$. Then $L_{L(W)}(V)= L(V)$, and hence $\dim(V)=1$ by Theorem \ref{inverse-semigroup}. 

\vspace{0.1cm}
Conversely, suppose that $\dim(V)\leq 1$. The result is obvious if $\dim(V)=0$. Let us assume that $\dim(V)=1$. Take $W=\{0\}$. Then $L_{L(W)}(V)= L(V)$ and $L(W)$ is an inverse semigroup. Hence, since $\dim(V)=1$, we conclude that $L(V)$ is an inverse semigroup by Theorem \ref{inverse-semigroup}.

\end{proof}

The following theorem gives a necessary and sufficient condition for $L_{\mathbb{S}(W)}(V)$ to be completely regular.

\begin{theorem}\label{completely-regular-semigroup}
Let $\mathbb{S}(W)$ be a subsemigroup of $L(W)$. Then $L_{\mathbb{S}(W)}(V)$ is a completely regular semigroup if and only if $\mathbb{S}(W)$ is a completely regular semigroup and one of the following holds:
\begin{enumerate}
\item[\rm(i)] $W=V$.
\item[\rm(ii)] $\codim(W)=1$ and $\mathbb{S}(W)$ is a subgroup of $\aut(W)$.
\end{enumerate}	
\end{theorem}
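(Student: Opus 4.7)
The proof will split into forward and converse directions, and the forward direction breaks further by whether $W=V$. For the forward direction, I would first observe that $\mathbb{S}(W)$ is a homomorphic image of $L_{\mathbb{S}(W)}(V)$ by Proposition~\ref{epimorphism-L-VW}, and since complete regularity passes to homomorphic images (every element having a group inverse is preserved), $\mathbb{S}(W)$ is completely regular. Complete regularity implies regularity, so Theorem~\ref{regular semigroup} gives that either $W=V$ (yielding (i) at once) or $\mathbb{S}(W)$ is a subgroup of $\aut(W)$. In the latter case I still need to force $\codim(W)=1$.

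To rule out $\codim(W)\geq 2$, I would argue by contradiction. Let $B_1$ be a basis of $W$, extend to a basis $B_1\cup B_2$ of $V$, and pick distinct $u_1,u_2\in B_2$. Define $f\in L(V)$ by $bf=b$ for $b\in B_1$, $u_1f=u_2$, $u_2f=0$, and $bf=0$ for every other $b\in B_2$. Since $\mathbb{S}(W)$ is a subgroup of $\aut(W)$ we have $I_W\in\mathbb{S}(W)$, and $f_{\upharpoonright_W}=I_W$ shows $f\in L_{\mathbb{S}(W)}(V)$. Observe $u_1\in N(f^2)\setminus N(f)$. If $f$ belonged to a subgroup of $L_{\mathbb{S}(W)}(V)$, it would be group-invertible inside the larger semigroup $L(V)$, and group-invertibility in $L(V)$ forces $V=R(f)\oplus N(f)$, equivalently $N(f)=N(f^2)$; this contradicts $u_1\in N(f^2)\setminus N(f)$. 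Hence $\codim(W)=1$, and (ii) follows.

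For the converse, case (i) is immediate since $L_{\mathbb{S}(W)}(V)=\mathbb{S}(W)$. In case (ii), let $f\in L_{\mathbb{S}(W)}(V)$ and put $g\colonequals f_{\upharpoonright_W}\in\mathbb{S}(W)\subseteq\aut(W)$. Fix $v\in V\setminus W$ with $V=W\oplus\langle v\rangle$ and write $vf=w_0+\lambda v$. If $\lambda\neq 0$, a direct check shows $f$ is bijective on $V$ with $f^{-1}_{\upharpoonright_W}=g^{-1}\in\mathbb{S}(W)$, so $f$ is a unit of $L_{\mathbb{S}(W)}(V)$ and lies in its group of units. If $\lambda=0$, define $h\in L(V)$ by $h_{\upharpoonright_W}=g^{-1}$ and $vh=g^{-2}(w_0)$; then $h\in L_{\mathbb{S}(W)}(V)$, and a routine computation shows $fh=hf$, $fhf=f$, and $hfh=h$, so $f$ lies in the subgroup whose identity is the idempotent $fh$. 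Either way $f$ belongs to a subgroup, so $L_{\mathbb{S}(W)}(V)$ is completely regular.

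The delicate step is the construction of the bad element $f$ in the forward direction, since one must arrange a length-two chain $u_1\mapsto u_2\mapsto 0$ that simultaneously lives in the complement of $W$ (so $f_{\upharpoonright_W}$ remains in $\mathbb{S}(W)$) yet blocks the $N(f)=N(f^2)$ condition required for group-invertibility. The matching subtlety in the converse is pinning down $vh=g^{-2}(w_0)$ in the $\lambda=0$ subcase: this particular choice is dictated by solving $fh=hf$ and $hfh=h$ simultaneously, and using a different assignment to $v$ produces an $h$ that satisfies some but not all of the needed identities.
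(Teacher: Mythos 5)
Your proof is correct, and the forward direction takes a genuinely different (and shorter) route than the paper's. Where you obtain the dichotomy ``$W=V$ or $\mathbb{S}(W)$ is a subgroup of $\aut(W)$'' in one stroke from the regularity characterization (Theorem \ref{regular semigroup}), the paper re-derives the subgroup property from scratch: it takes $\beta\in\mathbb{S}(W)$, uses complete regularity of $\mathbb{S}(W)$ to get that $R(\beta)$ is a transversal of $\ker(\beta)$, and builds an explicit $g\in L_{\mathbb{S}(W)}(V)$ that is not completely regular whenever $\beta$ fails to be injective; your appeal to the already-proved theorem buys the same conclusion with no new construction. Your obstruction to $\codim(W)\geq 2$ is also different: you arrange a chain $u_1\mapsto u_2\mapsto 0$ and kill group-invertibility via $N(f)\neq N(f^2)$, whereas the paper sends $v_2\mapsto v_1$ and $v_1\mapsto w\alpha$ so that $R(f)$ fails to be a transversal of $\ker(f)$. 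Note that the paper's construction uses an arbitrary $\alpha\in\mathbb{S}(W)$ and is carried out \emph{before} the subgroup property is known, while yours needs $I_W\in\mathbb{S}(W)$ --- which is legitimate in your ordering, since the subgroup property is already in hand at that point. One small imprecision: ``$V=R(f)\oplus N(f)$, equivalently $N(f)=N(f^2)$'' is only an implication in infinite dimensions ($N(f)=N(f^2)$ is equivalent to $R(f)\cap N(f)=\{0\}$ alone), but you only use the direction you need. The converse is essentially the paper's argument: the unit case for $\lambda\neq 0$, and for $\lambda=0$ exactly the paper's commuting inverse $\alpha_{y\lambda^{-2}}^{\lambda^{-1}}$, which is your $vh=w_0g^{-2}$ written in the paper's notation.
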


\begin{proof}[\textbf{Proof}]
Suppose that $L_{\mathbb{S}(W)}(V)$ is completely regular. Then $\mathbb{S}(W)$ is completely regular by Proposition \ref{epimorphism-L-VW} and the fact that any homomorphic image of a completely regular semigroup is completely regular (cf. \cite[Lemma II.2.4]{pet-reil99}).

\vspace{0.05cm}
The case $W = V$ may be possible, since $L_{\mathbb{S}(W)}(V) = \mathbb{S}(W)$ if $W = V$. Assume that $W\neq V$. First, we show that $\codim(W)=1$. Let $B_1$ be a basis for $W$. Extend $B_1$ to a basis $B_1\cup B_2$ for $V$, where $B_2\subseteq V\setminus W$. Claim that $|B_2|=1$. Suppose to the contrary that there are distinct $v_1, v_2\in B_2$. Take $\alpha \in \mathbb{S}(W)$, and then fix $w\in R(\alpha)$. Define $f\in L(V)$ by
\begin{equation*}
vf=	
\begin{cases}
	v\alpha    & \text{if $v\in B_1$},\\
	v_1        & \text{if $v=v_2$},\\
	w\alpha    & \text{if $v=B_2\setminus \{v_2\}$}.
\end{cases}
\end{equation*}
Clearly $f \in L_{\mathbb{S}(W)}(V)$. Moreover, we have $v_1, w\in R(f)$ and $v_1f=wf$. Therefore $R(f)$ is not a transversal of $\ker(f)$, and so $f$ does not belong to any subgroup of $T(V)$ (cf. \cite[Theorem 2.10]{clifford61}). Thus $f$ is not contained in any subgroup of $L_{\mathbb{S}(W)}(V)$, which  contradicts the completely regularity of $L_{\mathbb{S}(W)}(V)$. Hence $|B_2|=1$, and thus $\codim(W)=1$.

\vspace{0.05cm}
Finally, we show that $\mathbb{S}(W)$ is a subgroup of $\aut(W)$. It is obvious if $W=\{0\}$. Assume that $W\neq \{0\}$. Let $\beta\in \mathbb{S}(W)$. Claim that $\beta$ is injective. Suppose to the contrary that $w_1\beta=w_2\beta$ for some distinct $w_1,w_2\in W$. Recall that $\mathbb{S}(W)$ is completely regular. It follows that $\beta$ is contained in some subgroup of $\mathbb{S}(W)$, and so $\beta$ is contained in some subgroup of $T(W)$. Therefore $R(\beta)$ is a transversal of $\ker(\beta)$ (cf. \cite[Theorem 2.10]{clifford61}). Choose $w_0\in R(\beta)$ such that $w_0$ belongs to the $\ker(\beta)$-class that contains $w_1$ and $w_2$. We may assume that $w_0\neq w_1$. Define $g\in L(V)$ by 
\begin{equation*}
vg=	
\begin{cases}
	v\beta      & \text{if $v\in B_1$},\\
	w_1         & \text{if $v\in B_2$}.
\end{cases}
\end{equation*}
Clearly $g \in L_{\mathbb{S}(W)}(V)$. However, we have $w_0,w_1\in R(g)$ and $w_0g=w_1g$. Therefore $R(g)$ is not a transversal of $\ker(g)$, and so $g$ does not belong to any subgroup of $T(V)$ (cf. \cite[Theorem 2.10]{clifford61}). Thus $g$ is not contained in any subgroup of $L_{\mathbb{S}(W)}(V)$, which  contradicts the completely regularity of $L_{\mathbb{S}(W)}(V)$. Hence $\beta$ is injective. Further, we see that $\beta$ is surjective, since $\beta$ is injective and $R(\beta)$ is a transversal of $\ker(\beta)$. Thus $\mathbb{S}(W)$ is a subsemigroup of $\aut(W)$. Hence, since $\mathbb{S}(W)$ is completely regular, we conclude that $\mathbb{S}(W)$ is a subgroup of $\aut(W)$ by Lemma \ref{r-subsemigroup=subgroup}.

\vspace{0.1cm}
Conversely, suppose that the given conditions hold. If $W=V$, then $L_{\mathbb{S}(W)}(V)=\mathbb{S}(W)$, and so $L_{\mathbb{S}(W)}(V)$ is completely regular. Assume that $W\neq V$. Then, by (ii), $\codim(W)=1$ and $\mathbb{S}(W)$ is a subgroup of $\aut(W)$. Take a basis $B$ for $W$. Then $B\cup \{x\}$ is a basis for $V$, where $x\in V\setminus W$. For $z\in V$ and $\lambda\in \mathbb{S}(W)$, let $\alpha_z^{\lambda}\in L_{\mathbb{S}(W)}(V)$ be such that  $(\alpha_z^{\lambda})_{\upharpoonright_W} = \lambda$ and $x\alpha_z^{\lambda} = z$. Notice that $L_{\mathbb{S}(W)}(V)=\{\alpha_z^{\lambda}\colon z \in V \text{ and } \lambda \in \mathbb{S}(W)\}$, and for all $y\in W$, $z\in V$, and $\lambda, \delta\in \mathbb{S}(W)$,  
\[\alpha_x^{\lambda}\alpha_x^{\delta} = \alpha_x^{\lambda\delta} \;\;\text{ and }\;\;\alpha_y^{\lambda}\alpha_z^{\delta} = \alpha_{y\delta}^{\lambda\delta}.\]

\vspace{0.05cm}
Let $f\in L_{\mathbb{S}(W)}(V)$. Then $f = \alpha_y^{\lambda}$ for some $y\in V$ and $\lambda \in \mathbb{S}(W)$. There are two cases to consider.

\vspace{0.05cm}
\noindent\textbf{Case 1:} $y\in W$. Recall that $\mathbb{S}(W)$ is a subgroup of $\aut(W)$. Define $\varepsilon = \alpha_{y\lambda^{-1}}^{I_W}$ and $g = \alpha_{y\lambda^{-2}}^{\lambda^{-1}}$, where $\lambda^{-1}$ denotes the inverse of $\lambda$ in $\mathbb{S}(W)$. Obviously $\varepsilon, g\in L_{\mathbb{S}(W)}(V)$. Moreover, we obtain
\begin{align*}
\varepsilon\varepsilon & = \alpha_{y\lambda^{-1}}^{I_W} \alpha_{y\lambda^{-1}}^{I_W} = \alpha_{y\lambda^{-1}}^{I_W} = \varepsilon,\\
f\varepsilon & = \alpha_y^{\lambda} \alpha_{y\lambda^{-1}}^{I_W} = \alpha_y^{\lambda} = f,\\
\varepsilon f & =  \alpha_{y\lambda^{-1}}^{I_W}\alpha_y^{\lambda} = \alpha_y^{\lambda} = f,\\
g\varepsilon & = \alpha_{y\lambda^{-2}}^{\lambda^{-1}} \alpha_{y\lambda^{-1}}^{I_W} = \alpha_{y\lambda^{-2}}^{\lambda^{-1}} = g,\\
\varepsilon g & =  \alpha_{y\lambda^{-1}}^{I_W}\alpha_{y\lambda^{-2}}^{\lambda^{-1}} = \alpha_{y\lambda^{-2}}^{\lambda^{-1}} = g,\\
fg & = \alpha_y^{\lambda} \alpha_{y\lambda^{-2}}^{\lambda^{-1}} = \alpha_{y\lambda^{-1}}^{I_W} = \varepsilon,\\
gf & = \alpha_{y\lambda^{-2}}^{\lambda^{-1}}\alpha_y^{\lambda}  = \alpha_{y\lambda^{-1}}^{I_W} = \varepsilon.
\end{align*}
Hence the subsemigroup $\langle f, g\rangle$ of $L_{\mathbb{S}(W)}(V)$ generated by $\{f,g\}$ is a subgroup of $L_{\mathbb{S}(W)}(V)$.

\vspace{0.05cm}
\noindent\textbf{Case 2:} $y\in  V\setminus W$. Then, since $\mathbb{S}(W)$ is a subgroup of $\aut(W)$ and $\lambda\in \mathbb{S}(W)$, we see that $f$ is bijective. Therefore $f\in U(L_{\mathbb{S}(W)}(V))$. 

\vspace{0.1cm}
Thus, in either case, we see that $f$ belongs to a subgroup of $L_{\mathbb{S}(W)}(V)$. Hence, since $f$ is arbitrary, we conclude that $L_{\mathbb{S}(W)}(V)$ is completely regular.  
\end{proof}


The next proposition is a consequence of Theorem \ref{completely-regular-semigroup}, which determines when $L(V)$ is a completely regular semigroup.

\begin{proposition}
$L(V)$ is a completely regular semigroup if and only if $\dim(V)\leq 1$.	
\end{proposition}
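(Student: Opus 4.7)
The plan is to deduce this corollary directly from Theorem \ref{completely-regular-semigroup} by specializing $W$ to the zero subspace, mimicking the structure used in the analogous inverse-semigroup corollary. The key observation is that if we take $W=\{0\}$, then $L(W)$ is a trivial one-element semigroup and $L_{L(W)}(V)=\overline{L}(V,W)=L(V)$, so the general theorem applies.

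For the forward direction, I would assume $L(V)$ is completely regular. The case $V=\{0\}$ gives $\dim(V)=0$ immediately, so assume $V\neq\{0\}$. Setting $W=\{0\}$, the semigroup $L_{L(W)}(V)=L(V)$ is completely regular, so Theorem \ref{completely-regular-semigroup} forces one of its two alternatives. Alternative (i), $W=V$, is excluded because $V\neq\{0\}$. Alternative (ii) yields $\codim(W)=1$, which means $\dim(V)=1$ (the condition that $L(W)$ be a subgroup of $\aut(W)$ is automatic, since $L(\{0\})$ is the trivial group). Hence $\dim(V)\leq 1$.

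For the converse, again set $W=\{0\}$. If $\dim(V)=0$, then $L(V)$ is the trivial semigroup and is trivially completely regular. If $\dim(V)=1$, note that $L(W)$ is the trivial one-element semigroup, hence completely regular, and $\codim(W)=\dim(V)=1$ with $L(W)=\aut(W)$ a (trivial) subgroup. Theorem \ref{completely-regular-semigroup}(ii) then gives that $L_{L(W)}(V)=L(V)$ is completely regular.

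There is no real obstacle here — the proof is a one-shot application of Theorem \ref{completely-regular-semigroup} with $W=\{0\}$, exactly parallel to the preceding corollary for inverse semigroups. The only care needed is to handle the degenerate case $V=\{0\}$ separately, since otherwise the theorem's hypothesis $W\neq V$ is vacuously met without additional input.
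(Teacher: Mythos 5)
Your proof is correct and follows essentially the same route as the paper: both specialize Theorem \ref{completely-regular-semigroup} to $W=\{0\}$, using $L_{L(\{0\})}(V)=L(V)$ and the fact that $L(\{0\})$ is the trivial group, with the degenerate case $V=\{0\}$ handled separately. No issues.
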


\begin{proof}[\textbf{Proof}]
Suppose that $L(V)$ is completely regular. The result is obvious if $V=\{0\}$. Let us assume that $V\neq \{0\}$. Take $W=\{0\}$. Then $L_{L(W)}(V) = L(V)$, and so $\codim(W)=1$ by Theorem \ref{completely-regular-semigroup}. Hence $\dim(V)=1$.

\vspace{0.05cm}
Conversely, suppose that $\dim(V)\leq 1$. The result is obvious if $\dim(V)=0$. Let us assume that $\dim(V)=1$. Take $W = \{0\}$. Then obviously $L(W)$ is a group, $\codim(W)=1$, and $L_{L(W)}(V) = L(V)$. Hence $L(V)$ is completely regular by Theorem \ref{completely-regular-semigroup}.
\end{proof}



\begin{thebibliography}{10}
	
	\bibitem{chaiya-sf19}
	Y. ~Chaiya.
	\newblock{Natural partial order and finiteness conditions on semigroups of linear transformations with invariant subspaces}.
	\newblock{\em Semigroup Forum}, 99: 579--590, 2019.
	
	
	\bibitem{clifford61}
	A. ~H. ~Clifford and ~G. ~B. ~Preston.
	\newblock{\em The Algebraic Theory of Semigroups, Volume I}.
	\newblock{American Mathematical Society, Number 7 in Mathematical Surveys}, 1961.
	
	
	\bibitem{hick97}
	J. ~B. Hickey and ~M. ~V. ~Lawson.
	\newblock{Unit regular monoids}.
	\newblock{\em Proceedings of the Royal Society of Edinburgh: Section A Mathematics}, 127(1): 127--143, 1997.
	
	
	\bibitem{hony11}
	P. ~Honyam and ~J. ~Sanwong.
	\newblock{Semigroups of transformations with invariant set}.
	\newblock{\em Journal of the Korean Mathematical Society}, 48(2): 289--300, 2011.
	
	
	\bibitem{howie95}
	J. ~M. ~Howie.
	\newblock{\em Fundamentals of Semigroup Theory}.
	\newblock{\em Volume 12 of London Mathematical Society Monographs}.
	\newblock{Oxford University Press, New York}, 1995.
	
	
	\bibitem{koni-sf22}
	J. ~Konieczny. 
	\newblock{Semigroups of transformations whose restrictions belong to a
		given semigroup}.
	\newblock{\em Semigroup Forum}, 104(1): 109--124, 2022.
	
	
	\bibitem{magill66}
	K. ~D. ~Magill ~Jr.
	\newblock{Subsemigroups of {S(X)}}.
	\newblock{\em Mathematica Japonica}, 11: 109--115, 1966.
	
	
	\bibitem{nenth-ijmms06}
	S. ~Nenthein and ~Y. ~Kemprasit.
	\newblock{On transformation semigroups which are {$\mathcal{B}\mathcal{Q}$}-semigroup}.
	\newblock{\em International Journal of Mathematics and Mathematical Sciences}, 2006: 1--10, 2006.
	
	
	\bibitem{nenth05}
	S. ~Nenthein, ~P. ~Youngkhong, and ~Y. ~Kemprasit.
	\newblock{Regular elements of some transformation semigroups}
	\newblock{\em Pure Mathematics and Applications}, 16(3): 307--314, 2005.
	
	
	\bibitem{pet-reil99}
	M. ~Petrich and ~N. ~R. ~Reilly.
	\newblock{\em Completely Regular Semigroups}.
	\newblock{John Wiley and Sons, New York}, 1999.
	
	
	\bibitem{s-roman07}
	S. ~Roman.
	\newblock{\em Advanced Linear Algebra}.
	\newblock{\em volume 135 of Graduate Texts in Mathematics}.
	\newblock{Springer-Verlag, New York}, 3rd edition, 2008.
	
	
	\bibitem{shubh-lma22}
	M. ~Sarkar and ~S. ~N. ~Singh.
	\newblock{Unit-regular and semi-balanced elements in various semigroups of transformations}.
	\newblock{https://arXiv.org/abs/2106.08063v2}.
	
	
	\bibitem{shubh-sf22-2}
	M. ~Sarkar and ~S. ~N. ~Singh.
	\newblock{Comments on \lq\lq Semigroups of transformations whose restrictions belong to a given semigroup\rq\rq}.
	\newblock{\em Semigroup Forum}, 104(3): 758--759, 2022.
	
	\bibitem{shubh-aejm22}
	M. ~Sarkar and ~S. ~N. ~Singh.
	\newblock{On certain semigroups of transformations with an invariant set}.
	\newblock{\em Asian-European Journal of Mathematics}, 15(11): 2250198, 2022.
	
	
	\bibitem{sun-ams15}
	L. ~Sun and J. ~Sun.
	\newblock{A note on naturally ordered semigroups of transformations with invariant set}.
	\newblock{\em Bulletin of the Australian Mathematical Society}, 91(2): 264--267, 2015.
	
	
	\bibitem{sun-as13}
	L. ~Sun and L. ~Wang.
	\newblock{Natural partial order in semigroups of transformations with invariant set}.
	\newblock{\em Bulletin of the Australian Mathematical Society}, 87(1): 94--107, 2013.
	
	
\end{thebibliography}
\end{document}